\newtheorem{theorem}{Theorem}
\theoremstyle{plain}
\newtheorem{axiom}{Axiom}
\newtheorem{conjecture}{Conjecture}
\newtheorem{corollary}{Corollary}
\newtheorem{definition}{Definition}
\newtheorem{example}{Example}
\newtheorem{exercise}{Exercise}
\newtheorem{lemma}{Lemma}
\newtheorem{proposition}{Proposition}
\newtheorem{remark}{Remark}
\numberwithin{equation}{section}
\providecommand{\BOXEDSPECIAL}[4]{\hbox to #2{\raise #3\hbox to #2{\null #1\hfil}}}
\chardef\@x10\chardef\@xv60
\def\tcitime{
\def\@time{%
  \@minute\time\@hour\@minute\divide\@hour\@xv
  \ifnum\@hour<\@x 0\fi\the\@hour:%
  \multiply\@hour\@xv\advance\@minute-\@hour
  \ifnum\@minute<\@x 0\fi\the\@minute
  }}%
\def\QCTOpt[#1]#2{%
  \def\QCTOptB{#1}
  \def\QCTOptA{#2}
}
\def\QCTNOpt#1{%
  \def\QCTOptA{#1}
  \let\QCTOptB\empty
}
\def\Qct{%
  \@ifnextchar[{%
    \QCTOpt}{\QCTNOpt}
}
\def\QCBOpt[#1]#2{%
  \def\QCBOptB{#1}
  \def\QCBOptA{#2}
}
\def\QCBNOpt#1{%
  \def\QCBOptA{#1}
  \let\QCBOptB\empty
}
\def\Qcb{%
  \@ifnextchar[{%
    \QCBOpt}{\QCBNOpt}
}
\def\PrepCapArgs{%
  \ifx\QCBOptA\empty
    \ifx\QCTOptA\empty
      {}%
    \else
      \ifx\QCTOptB\empty
        {\QCTOptA}%
      \else
        [\QCTOptB]{\QCTOptA}%
      \fi
    \fi
  \else
    \ifx\QCBOptA\empty
      {}%
    \else
      \ifx\QCBOptB\empty
        {\QCBOptA}%
      \else
        [\QCBOptB]{\QCBOptA}%
      \fi
    \fi
  \fi
}
\def\GRAPHICSPS#1{%
 \ifcase\GRAPHICSTYPE%\GRAPHICSTYPE=0
   \special{ps: #1}%
 \or%\GRAPHICSTYPE=1
   \special{language "PS", include "#1"}%
%%%\or%\GRAPHICSTYPE=2
%%%  #1%
 \fi
}%
\def\graffile#1#2#3#4#5{%
    \bgroup
    \leavevmode
    \@ifundefined{bbl@deactivate}{\def~{\string~}}{\activesoff}
    \raise -#4 \BOXTHEFRAME{%
%%DAvB 96/03/07
%%        \hbox to #2{\raise #3\hbox to #2{\null #1\hfil}}}%
       \BOXEDSPECIAL{#1}{#2}{#3}{#5}}%
    \egroup
}%
\def\draftbox#1#2#3#4{%
 \leavevmode\raise -#4 \hbox{%
  \frame{\rlap{\protect\tiny #1}\hbox to #2%
   {\vrule height#3 width\z@ depth\z@\hfil}%
  }%
 }%
}%
\newif\ifwasdraft
\def\GRAPHIC#1#2#3#4#5{%
 \ifnum\draft=\@ne\draftbox{#2}{#3}{#4}{#5}%
%%DAvB 06/03/07
%%  \else\graffile{#1}{#3}{#4}{#5}%
  \else\graffile{#1}{#3}{#4}{#5}{#2}%
  \fi
 }%
\def\addtoLaTeXparams#1{%
    \edef\LaTeXparams{\LaTeXparams #1}}%
\newif\ifBoxFrame \BoxFramefalse
\newif\ifOverFrame \OverFramefalse
\newif\ifUnderFrame \UnderFramefalse
\def\BOXTHEFRAME#1{%
   \hbox{%
      \ifBoxFrame
         \frame{#1}%
      \else
         {#1}%
      \fi
   }%
}
\def\doFRAMEparams#1{\BoxFramefalse\OverFramefalse\UnderFramefalse\readFRAMEparams#1\end}%
\def\readFRAMEparams#1{%
 \ifx#1\end%
  \let\next=\relax
  \else
  \ifx#1i\dispkind=\z@\fi
  \ifx#1d\dispkind=\@ne\fi
  \ifx#1f\dispkind=\tw@\fi
  \ifx#1t\addtoLaTeXparams{t}\fi
  \ifx#1b\addtoLaTeXparams{b}\fi
  \ifx#1p\addtoLaTeXparams{p}\fi
  \ifx#1h\addtoLaTeXparams{h}\fi
  \ifx#1X\BoxFrametrue\fi
  \ifx#1O\OverFrametrue\fi
  \ifx#1U\UnderFrametrue\fi
  \ifx#1w
    \ifnum\draft=1\wasdrafttrue\else\wasdraftfalse\fi
    \draft=\@ne
  \fi
  \let\next=\readFRAMEparams
  \fi
 \next
 }%
\def\IFRAME#1#2#3#4#5#6{%
      \bgroup
      \let\QCTOptA\empty
      \let\QCTOptB\empty
      \let\QCBOptA\empty
      \let\QCBOptB\empty
      #6%
      \parindent=0pt%
      \leftskip=0pt
      \rightskip=0pt
      \setbox0 = \hbox{\QCBOptA}%
      \@tempdima = #1\relax
      \ifOverFrame
          % Do this later
          \typeout{This is not implemented yet}%
          \show\HELP
      \else
         \ifdim\wd0>\@tempdima
            \advance\@tempdima by \@tempdima
            \ifdim\wd0 >\@tempdima
               \textwidth=\@tempdima
               \setbox1 =\vbox{%
                  \noindent\hbox to \@tempdima{\hfill\GRAPHIC{#5}{#4}{#1}{#2}{#3}\hfill}\\%
                  \noindent\hbox to \@tempdima{\parbox[b]{\@tempdima}{\QCBOptA}}%
               }%
               \wd1=\@tempdima
            \else
               \textwidth=\wd0
               \setbox1 =\vbox{%
                 \noindent\hbox to \wd0{\hfill\GRAPHIC{#5}{#4}{#1}{#2}{#3}\hfill}\\%
                 \noindent\hbox{\QCBOptA}%
               }%
               \wd1=\wd0
            \fi
         \else
            %\show\BBB
            \ifdim\wd0>0pt
              \hsize=\@tempdima
              \setbox1 =\vbox{%
                \unskip\GRAPHIC{#5}{#4}{#1}{#2}{0pt}%
                \break
                \unskip\hbox to \@tempdima{\hfill \QCBOptA\hfill}%
              }%
              \wd1=\@tempdima
           \else
              \hsize=\@tempdima
              \setbox1 =\vbox{%
                \unskip\GRAPHIC{#5}{#4}{#1}{#2}{0pt}%
              }%
              \wd1=\@tempdima
           \fi
         \fi
         \@tempdimb=\ht1
         \advance\@tempdimb by \dp1
         \advance\@tempdimb by -#2%
         \advance\@tempdimb by #3%
         \leavevmode
         \raise -\@tempdimb \hbox{\box1}%
      \fi
      \egroup%
}%
\def\DFRAME#1#2#3#4#5{%
 \begin{center}
     \let\QCTOptA\empty
     \let\QCTOptB\empty
     \let\QCBOptA\empty
     \let\QCBOptB\empty
     \ifOverFrame 
        #5\QCTOptA\par
     \fi
     \GRAPHIC{#4}{#3}{#1}{#2}{\z@}
     \ifUnderFrame 
        \nobreak\par\nobreak#5\QCBOptA
     \fi
 \end{center}%
 }%
\def\FFRAME#1#2#3#4#5#6#7{%
 \begin{figure}[#1]%
  \let\QCTOptA\empty
  \let\QCTOptB\empty
  \let\QCBOptA\empty
  \let\QCBOptB\empty
  \ifOverFrame
    #4
    \ifx\QCTOptA\empty
    \else
      \ifx\QCTOptB\empty
        \caption{\QCTOptA}%
      \else
        \caption[\QCTOptB]{\QCTOptA}%
      \fi
    \fi
    \ifUnderFrame\else
      \label{#5}%
    \fi
  \else
    \UnderFrametrue%
  \fi
  \begin{center}\GRAPHIC{#7}{#6}{#2}{#3}{\z@}\end{center}%
  \ifUnderFrame
    #4
    \ifx\QCBOptA\empty
      \caption{}%
    \else
      \ifx\QCBOptB\empty
        \caption{\QCBOptA}%
      \else
        \caption[\QCBOptB]{\QCBOptA}%
      \fi
    \fi
    \label{#5}%
  \fi
  \end{figure}%
 }%
\def\makeactives{
  \catcode`\"=\active
  \catcode`\;=\active
  \catcode`\:=\active
  \catcode`\'=\active
  \catcode`\~=\active
}
   \gdef\activesoff{%
      \def"{\string"}
      \def;{\string;}
      \def:{\string:}
      \def'{\string'}
%%DAvB      \def~{\string~}%This def causes ~spaces in figure captions to print as ~ in Babel dutch
      %\bbl@deactivate{"}%
      %\bbl@deactivate{;}%
      %\bbl@deactivate{:}%
      %\bbl@deactivate{'}%
    }
\def\FRAME#1#2#3#4#5#6#7#8{%
 \bgroup
 \ifnum\draft=\@ne
   \wasdrafttrue
 \else
   \wasdraftfalse%
 \fi
 \def\LaTeXparams{}%
 \dispkind=\z@
 \def\LaTeXparams{}%
 \doFRAMEparams{#1}%
 \ifnum\dispkind=\z@\IFRAME{#2}{#3}{#4}{#7}{#8}{#5}\else
  \ifnum\dispkind=\@ne\DFRAME{#2}{#3}{#7}{#8}{#5}\else
   \ifnum\dispkind=\tw@
    \edef\@tempa{\noexpand\FFRAME{\LaTeXparams}}%
    \@tempa{#2}{#3}{#5}{#6}{#7}{#8}%
    \fi
   \fi
  \fi
  \ifwasdraft\draft=1\else\draft=0\fi{}%
  \egroup
 }%
\def\TEXUX#1{"texux"}
\long\def\QQQ#1#2{%
     \long\expandafter\def\csname#1\endcsname{#2}}%
\long\def\QQA#1#2{}%
\newcommand{\QTR}[2]{\csname text#1\endcsname{#2}}%%DAvB
\def\EXPAND#1[#2]#3{}%
\def\NOEXPAND#1[#2]#3{}%
\def\LaTeXparent#1{}%
\def\ChildStyles#1{}%
\def\ChildDefaults#1{}%
\def\QTagDef#1#2#3{}%
  \providecommand{\UNICODE}[2][]{}
\def\QQfnmark#1{\footnotemark}
 \def\abstract{%
  \if@twocolumn
   \section*{Abstract (Not appropriate in this style!)}%
   \else \small 
   \begin{center}{\bf Abstract\vspace{-.5em}\vspace{\z@}}\end{center}%
   \quotation 
   \fi
  }%
   \def\registered{\relax\ifmmode{}\r@gistered
                    \else$\m@th\r@gistered$\fi}%
 \def\r@gistered{^{\ooalign
  {\hfil\raise.07ex\hbox{$\scriptstyle\rm\text{R}$}\hfil\crcr
  \mathhexbox20D}}}}{}%
\newdimen\theight
\def\Column{%
 \vadjust{\setbox\z@=\hbox{\scriptsize\quad\quad tcol}%
  \theight=\ht\z@\advance\theight by \dp\z@\advance\theight by \lineskip
  \kern -\theight \vbox to \theight{%
   \rightline{\rlap{\box\z@}}%
   \vss
   }%
  }%
 }%
\def\qed{%
 \ifhmode\unskip\nobreak\fi\ifmmode\ifinner\else\hskip5\p@\fi\fi
 \hbox{\hskip5\p@\vrule width4\p@ height6\p@ depth1.5\p@\hskip\p@}%
 }%
\def\miss{\hbox{\vrule height2\p@ width 2\p@ depth\z@}}%
\def\tcol#1{{\baselineskip=6\p@ \vcenter{#1}} \Column}  %
\def\newfmtname{LaTeX2e}
  \DeclareOldFontCommand{\rm}{\normalfont\rmfamily}{\mathrm}
  \DeclareOldFontCommand{\sf}{\normalfont\sffamily}{\mathsf}
  \DeclareOldFontCommand{\tt}{\normalfont\ttfamily}{\mathtt}
  \DeclareOldFontCommand{\bf}{\normalfont\bfseries}{\mathbf}
  \DeclareOldFontCommand{\it}{\normalfont\itshape}{\mathit}
  \DeclareOldFontCommand{\sl}{\normalfont\slshape}{\@nomath\sl}
  \DeclareOldFontCommand{\sc}{\normalfont\scshape}{\@nomath\sc}
  \newcounter{equationnumber}  
  \def\mathletters{%
     \addtocounter{equation}{1}
     \edef\@currentlabel{\theequation}%
     \setcounter{equationnumber}{\c@equation}
     \setcounter{equation}{0}%
     \edef\theequation{\@currentlabel\noexpand\alph{equation}}%
  }
    \def\BibTeX{{\rm B\kern-.05em{\sc i\kern-.025em b}\kern-.08em
                 T\kern-.1667em\lower.7ex\hbox{E}\kern-.125emX}}}{}%
\def\AmS{{\protect\usefont{OMS}{cmsy}{m}{n}%
                A\kern-.1667em\lower.5ex\hbox{M}\kern-.125emS}}}{}%
\def\@@eqncr{\let\@tempa\relax
    \ifcase\@eqcnt \def\@tempa{& & &}\or \def\@tempa{& &}%
      \else \def\@tempa{&}\fi
     \@tempa
     \if@eqnsw
        \iftag@
           \@taggnum
        \else
           \@eqnnum\stepcounter{equation}%
        \fi
     \fi
     \global\tag@false
     \global\@eqnswtrue
     \global\@eqcnt\z@\cr}
\def\TCItag{\@ifnextchar*{\@TCItagstar}{\@TCItag}}
\def\@TCItag#1{%
    \global\tag@true
    \global\def\@taggnum{(#1)}}
\def\@TCItagstar*#1{%
    \global\tag@true
    \global\def\@taggnum{#1}}
\begin{document}
\title[A Second Order Ensemble Method]{A second-order Ensemble method based on a blended backward differentiation formula timestepping scheme for time-dependent Navier-Stokes equations}
\author{Nan Jiang}
\address[All authors]{Department of Scientific Computing, Florida State University, Tallahassee,
FL 32306, USA}
\email{njiang@fsu.edu}
\urladdr{https://people.sc.fsu.edu/\symbol{126}njiang}
\thanks{The research of the author described herein was supported by the US Air Force Office of Scientific Research grant FA9550-15-1-0001 and the US Department of Energy Office of Science grant DE-SC0010678.}
\thanks{}
\date{\today}
\keywords{Navier-Stokes Equations, Ensemble Calculation, Uncertainty Quantification, Blended BDF}
\dedicatory{ }
\begin{abstract}
We present a second-order ensemble method based on a blended three-step backward differentiation formula (BDF) timestepping scheme to compute an ensemble of Navier-Stokes equations. Compared with the only existing second-order ensemble method that combines the two-step BDF timestepping scheme and a special explicit second-order Adams-Bashforth treatment of the advection term, this method is more accurate with nominal increase in computational cost. We give comprehensive stability and error analysis for the method. Numerical examples are also provided to verify theoretical results and demonstrate the improved accuracy of the method.
\end{abstract}

\maketitle

\section{Introduction}
{\allowdisplaybreaks
Uncertainty quantification in geophysical systems as well as many engineering processes often involves computing an ensemble of nonlinear partial differential equations (PDE), see for instance \cite{CCDL05}, \cite{Lewis05}, \cite{LP08}, \cite{MX06}, \cite{TK93}. Solving these nonlinear PDEs numerically is usually very demanding in both computer resources and computing time, as even one single realization may require millions or even billions of degrees of freedom to obtain useful approximations. If the nonlinear effect is dominant, accurate approximations are hard to obtain, especially if the computational domain is large, e.g., global forecasting systems for numerical weather predictions. Computing ensembles inevitably results in a huge increase in the computational cost and poses a great challenge in performing accurate ensemble calculation. In the past few decades, most efforts have been devoted to developing data assimilation methods to reduce the number of realizations required, \cite{TK93}, \cite{BP95}. Only recently, an ensemble algorithm has been developed by Jiang and Layton \cite{JL14}, \cite{JL15} to compute an ensemble of time dependent Navier-Stokes equations efficiently. Instead of treating the simulation of each realization as separate tasks, this novel algorithm solves all realizations at one pass for each time level. It takes advantage of the efficiency of computing a linear system with multiple right hands for which highly efficient algorithms have been established and well studied, i.e., Block CG \cite{FOP95}, Block QMR \cite{FM97}, Block GMRES \cite{GS96}. As a result, this new ensemble algorithm can reduce the computational cost significantly compared to the usual routine of computing the realizations separately.

Stability and accuracy are critical aspects in the development of such algorithms. In \cite{JL14}, an ensemble time stepping scheme based on a combination of backward Euler and forward Euler is studied. Using the finite element method for spacial discretization, the method is proven to be long time stable and first order convergent under a CFL-like time step condition. This condition depends on Reynolds number and degrades quickly as Reynolds number grows. To relax it, two ensemble eddy viscosity numerical regularization methods are proposed in \cite{JL15}. They stabilize the system by adding extra numerical dissipation parameterized by mixing length and kinetic energy in fluctuations. A time relaxation regularization is also studied in \cite{TNW15}. It is also reported in \cite{TNW15} that grad-div stabilization can significantly weaken the time step restriction. As higher order methods are more efficient and thus more desirable in real engineering problems, developing accurate higher order ensemble methods is of great scientific and engineering interest. Nevertheless, extending the usual higher order timestepping schemes to the ensemble algorithm is not trivial, as the ensemble methods require different time discretizations for different terms to ensure its efficiency as well as stability. The only existing higher order ensemble method \cite{J15}, which we will refer to as (En-BDF2), is based on a two-step Backward Differentiation Formula (BDF2) and a special explicit second order in time Adams-Bashforth (AB2) treatment of the advection term. In this paper, we study a new second order ensemble method that is more accurate than (En-BDF2).

Classical BDF time schemes are among the most popular methods in the field of computational fluid dynamics (CFD) due to their strong stability properties, \cite{BDK82}, \cite{E04}, \cite{H01}. The highest order strongly A-stable BDF method is well-known to be the two-step BDF method. Classical BDF2 has been extensively used for large scale scientific computations as it can be used with large time steps without encountering numerical instability. Higher order multi-step BDF schemes are more accurate and efficient but less stable (not A stable). Hence one obvious approach is to blend the classical BDF2 and a classical higher order BDF method to obtain a multi-step method that is more accurate than classical BDF2 but still preserves good stability properties, such as A-stability. A family of such methods is proposed in \cite{NSMH03}, which blends the classical BDF2 and BDF3 method with a tuning parameter $\gamma$. The schemes are given by
\begin{gather}
D_{\gamma}(u_t^{n+1})= \gamma \left[\frac{3u^{n+1}-4u^n+u^{n-1}}{2\Delta t}\right]
+(1-\gamma)\left[\frac{\frac{11}{6}u^{n+1}-3u^n+\frac{3}{2}u^{n-1}-\frac{1}{3}u^{n-2}}{\Delta t}\right],\nonumber
\end{gather}
where $\gamma\in [\frac{1}{2}, 1]$. These are three-step methods with smaller coefficient on the leading term of the truncation error than classical BDF2, \cite{NJ11}. When $\gamma=\frac{1}{2}$, the scheme has the smallest truncation error constant, which is exactly half of the classical BDF2 scheme, \cite{VCL10}. This time marching scheme has been extensively tested in modern CFD codes in the area of aerodynamics, such as FUN3D developed and maintained at NASA Langley. In this paper, we propose a new second order ensemble method to efficiently compute an ensemble of Navier-Stokes equations based on this blended BDF scheme.

We consider an ensemble of $J$ Navier-Stokes equations with different initial conditions and/or different body forces, $j=1, ..., J$: 
\begin{align}
u_{j,t}+u_{j}\cdot\nabla u_{j}-\nu\triangle u_{j}+\nabla p_{j} &
=f_{j}(x,t)\text{, in }\Omega\text{, } \label{eq:NSE}\\
\nabla\cdot u_{j} &  =0\text{, in }\Omega\text{,}\nonumber\\
u_{j} &  =0\text{, on }\partial\Omega\text{,}\nonumber\\
u_{j}(x,0) &  =u_{j}^{0}(x)\text{, in }\Omega\text{,}\nonumber
\end{align}
where $\Omega$ is an open, regular domain in $\mathbb{R}^{d}$ $(d=2\text{ or }
3)$.

To construct a stable efficient ensemble algorithm, we need to use different time discretizations for different terms. The essential idea of the efficient ensemble algorithm is that all ensemble members share the same coefficient matrix and the main difficulty arises from the nonlinear term. Thus we split the nonlinear term into two terms with one containing the mean velocity that is independent of the index of ensemble members and the other one containing the fluctuation that characterizes each realization. The nonlinear term with the fluctuation needs to be lagged to previous time levels so it will go to the right hand side of the linear systems to be solved, so that the coefficient matrix is independent of the index of ensemble members. One consequence of lagging this term is a CFL-like condition to ensure the stability of the ensemble method. Now the key is to define an ensemble mean that is compatible with the blended BDF scheme. Let $t^{n}:=n\Delta t, n=0,1,2,...,N_T,$ and $T:= N_T\Delta t$ and denote $u_{j}^{n}=u_{j}(t^{n})$, $j=1, ..., J$. We then define the ensemble mean and the corresponding fluctuation by
\begin{gather}
\qquad \left\langle u \right\rangle ^n : =\frac{1}{J}\sum_{j=1}^{J}(3u_{j}^{n}-3u_{j}^{n-1}+u_j^{n-2}),\tag{mean}\label{Enmean}\\
 u_j^{\prime n}: =3u_{j}^{n}-3u_{j}^{n-1}+u_j^{n-2}-\left\langle u\right\rangle ^n.\tag{fluctuation}\label{Enfluc}
\end{gather}
$3u^{n}-3u^{n-1}+u^{n-2}$ is a third order extrapolation of $u^{n+1}$. Taking $\gamma=\frac{1}{2}$, we consider the following blended BDF for discretization of the time derivative of velocity $u.$

\begin{gather}
D_{\frac{1}{2}}(u_t^{n+1})= 
\frac{10u^{n+1}-15u^{n}+6u^{n-1}-u^{n-2}}{6\Delta t}.\nonumber
\end{gather}

Suppressing the spacial discretization, the second order ensemble method we study reads: for $j=1, ..., J$, given $u_j^0$, $u_j^1$ and $u_j^2$, find $u_j^{n+1}$ satisfying
\begin{gather}
\frac{10u_{j}^{n+1}-15u_{j}^{n}+6u_j^{n-1}-u_j^{n-2}}{6\Delta t}+\left\langle u\right\rangle^{n}\cdot\nabla u_{j}^{n+1}\tag{\text{En-BlendedBDF}}\label{EnBDF2AB2}\\
\qquad\qquad\qquad\qquad\qquad+u_j^{\prime n}\cdot\nabla \left(3u_{j}^{n}-3u_j^{n-1}+u_j^{n-2}\right)+\nabla p_{j}^{n+1}-\nu\Delta u_{j}^{n+1}=f_{j}^{n+1}\text{, \ }\nonumber
\\
\nabla\cdot u_{j}^{n+1}=0.\nonumber
\end{gather}

This is a four-level method. $u^0_j$ comes from given initial conditions of the problem. We need to obtain $u^1_j$ through some one-step method, such as Crank-Nicolson method. To get $u_j^2$, one can either use a one-step method or two-step method. The errors in these first few steps will affect the overall convergence rate of the method and thus they all need to be second order methods. We emphasize here that the timestepping schemes studied in \cite{NSMH03} are applied to single Navier-Stokes equations, while the ensemble timestepping method we study in this paper deals with multiple Navier-Stokes equations, for which the fluctuation-induced instability has to be taken into account.

The rest of the paper is organized as follows. In Section 2, we present the notation that will be used throughout the work, and the finite element formulation of the proposed method. In the third section, the long time stability of the method is proved under a CFL-like condition. In Section 4, we first provide upper bounds for the consistency error and then give a comprehensive error analysis for the fully discretized method. Numerical experiments and results are presented in Section 5 to confirm theoretical analysis. Finally, in Section 6, we state some concluding remarks.
}
\section{Notation and preliminaries}

Throughout this paper the $L^2(\Omega)$ norm of scalars, vectors, and tensors will be denoted by $\Vert \cdot\Vert$ with the usual $L^2$ inner product denoted by $(\cdot, \cdot)$.  $H^{k}(\Omega)$ is the Sobolev space $%
W_{2}^{k}(\Omega)$, with norm $\Vert\cdot\Vert_{k}$. 
Let $X,Q$ denote the velocity, pressure, and divergence free velocity spaces:
\begin{align*}
X:&=H_{0}^{1}(\Omega )^{d}=\left\lbrace v\in L^2(\Omega)^d: \nabla v\in L^2(\Omega)^{d\times d}\text{ and }v=0 \text{ on } \partial \Omega\right\rbrace,\\
Q : &=L_{0}^{2}(\Omega )=\left\lbrace q\in L^2(\Omega): \int_{\Omega} q \text{ }dx=0\right\rbrace,\\
V : &=\left\lbrace v\in X:  (\nabla\cdot v, q)=0, \forall q\in Q \right\rbrace.
\end{align*}
For $\forall u, v, w \in X$, we define the usual skew symmetric trilinear form $$b^{\ast }(u,v,w):=\frac{1}{2}
(u\cdot \nabla v,w)-\frac{1}{2}(u\cdot \nabla w,v),$$
which satisfies 
\begin{gather}
|b^{\ast }(u,v,w)|\leq C(\Omega )\Vert \nabla u\Vert\Vert\nabla
v\Vert\Vert\nabla w\Vert,\label{bd:0tri}\\
|b^{\ast }(u,v,w)|\leq C(\Omega )\Vert u \Vert^{1/2}\Vert \nabla u\Vert^{1/2}\Vert\nabla
v\Vert\Vert\nabla w\Vert,\label{bd:tri}\\
|b^{\ast }(u,v,w)|\leq C(\Omega )\Vert \nabla u\Vert\Vert\nabla
v\Vert\Vert\nabla w\Vert^{1/2}\Vert w\Vert^{1/2}.\label{bd:2tri}
\end{gather}
For $\forall u_h, v_h, w_h \in X_h$, we have \cite{JL14}
\begin{gather}
b^{\ast}(u_h, v_h, w_h)= \int_{\Omega}u_h\cdot\nabla v_h\cdot w_h dx +\frac{1}{2}\int_{\Omega} (\nabla\cdot u_h)(v_h\cdot w_h) dx.
\end{gather} 
\begin{lemma}
For $\forall u_h, v_h, w_h \in X_h$,
\begin{gather}
b^{\ast}(u_h, v_h, w_h)\leq \Vert u_h\Vert_{L^4}\Vert \nabla v_h\Vert\Vert w_h \Vert_{L^4}+C\Vert \nabla \cdot
u_h\Vert_{L^4}\Vert \nabla v_h\Vert\Vert w_h \Vert_{L^4}.
\end{gather}
\end{lemma}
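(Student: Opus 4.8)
The plan is to start from the identity recorded immediately above the lemma,
\begin{gather}
b^{\ast}(u_h, v_h, w_h)= \int_{\Omega}u_h\cdot\nabla v_h\cdot w_h \, dx +\frac{1}{2}\int_{\Omega} (\nabla\cdot u_h)(v_h\cdot w_h) \, dx,\nonumber
\end{gather}
and to bound the two integrals on the right separately, each time by H\"older's inequality followed by the Sobolev embedding $H_0^1(\Omega)\hookrightarrow L^4(\Omega)$, which is admissible for $d=2,3$.

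For the first integral I would apply H\"older's inequality with exponents $4$, $2$, $4$ to the factors $u_h$, $\nabla v_h$, $w_h$ (whose reciprocals sum to one), obtaining directly
\begin{gather}
\left|\int_{\Omega}u_h\cdot\nabla v_h\cdot w_h \, dx\right|\leq \Vert u_h\Vert_{L^4}\Vert \nabla v_h\Vert\Vert w_h \Vert_{L^4}.\nonumber
\end{gather}
This is precisely the first term on the right-hand side of the asserted bound, so no further work is needed on it.

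For the second integral I would use H\"older's inequality with exponents $2$, $4$, $4$ on $\nabla\cdot u_h$, $v_h$, $w_h$, giving
\begin{gather}
\left|\int_{\Omega} (\nabla\cdot u_h)(v_h\cdot w_h) \, dx\right|\leq \Vert \nabla\cdot u_h\Vert\, \Vert v_h\Vert_{L^4}\Vert w_h \Vert_{L^4}.\nonumber
\end{gather}
Then I would replace $\Vert v_h\Vert_{L^4}$ by $C(\Omega)\Vert \nabla v_h\Vert$ via the Sobolev embedding, and $\Vert \nabla\cdot u_h\Vert$ by $C(\Omega)\Vert \nabla\cdot u_h\Vert_{L^4}$ via the continuous inclusion $L^4(\Omega)\hookrightarrow L^2(\Omega)$ on the bounded domain $\Omega$. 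Absorbing the factor $\tfrac12$ and the geometric constants into a single $C$ and adding the result to the first estimate produces the claimed inequality.

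There is essentially no obstacle here: the whole argument is two applications of H\"older's inequality together with standard embeddings. The only points worth a remark are that the embedding into $L^4$ confines the argument to $d\le 4$, which is consistent with the standing hypothesis $d\in\{2,3\}$, and that boundedness of $\Omega$ is exactly what allows one to trade the $L^2$ norm of $\nabla\cdot u_h$ for its $L^4$ norm; both are assumptions already in force.
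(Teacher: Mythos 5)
Your proposal is correct and follows essentially the same route as the paper: both start from the identity $b^{\ast}(u_h,v_h,w_h)=\int_{\Omega}u_h\cdot\nabla v_h\cdot w_h\,dx+\tfrac12\int_{\Omega}(\nabla\cdot u_h)(v_h\cdot w_h)\,dx$ and bound each integral by H\"older's inequality with the same effective $(4,2,4)$ splitting, followed by standard embeddings. The only (immaterial) difference is in the second term, where the paper places $\nabla\cdot u_h$ directly in $L^4$ and $v_h$ in $L^2$ and then uses $\Vert v_h\Vert\le C\Vert\nabla v_h\Vert$, whereas you place $\nabla\cdot u_h$ in $L^2$ and $v_h$ in $L^4$ and then convert via $L^4\hookrightarrow L^2$ on the bounded domain together with $\Vert v_h\Vert_{L^4}\le C\Vert\nabla v_h\Vert$; both yield the stated bound.
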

\begin{proof}
By H$\ddot{o}$lder's inequality,
\begin{align*}
&\int_{\Omega}u_h\cdot\nabla v_h\cdot w_h dx \\
&\leq \left(\int_{\Omega} \vert u_h\cdot \nabla v_h\vert^{4/3}dx \right)^{3/4}\cdot\left(\int_{\Omega}\vert w_h\vert^4dx\right)^{1/4}\\
&\leq \left( \left(\int_{\Omega} \left(\vert u_h\vert^{4/3}\right)^{3} dx\right)^{1/3} \cdot\left(\int_{\Omega} \left(\vert \nabla v_h\vert^{4/3}\right)^{3/2} dx\right)^{2/3}  \right)^{3/4}\cdot\left(\int_{\Omega}\vert w_h\vert^4dx\right)^{1/4}\\
&\leq \Vert u_h \Vert_{L^4}\Vert \nabla v_h\Vert\Vert w_h\Vert_{L^4}
\end{align*} 
Similarly, we have
\begin{align*}
&\int_{\Omega} (\nabla \cdot u_h)( v_h\cdot w_h) dx \\
&\leq \left(\int_{\Omega}\vert \nabla \cdot u_h\vert^4dx\right)^{1/4}\cdot\left(\int_{\Omega} \vert v_h\cdot w_h\vert^{4/3}dx \right)^{3/4}\\
&\leq\left(\int_{\Omega}\vert \nabla \cdot u_h\vert^4dx\right)^{1/4}\cdot \left( \left(\int_{\Omega} \left(\vert w_h\vert^{4/3}\right)^{3} dx\right)^{1/3} \cdot\left(\int_{\Omega} \left(\vert v_h\vert^{4/3}\right)^{3/2} dx\right)^{2/3}  \right)^{3/4}\\
&\leq \Vert \nabla \cdot  u_h \Vert_{L^4}\Vert  v_h\Vert\Vert w_h\Vert_{L^4}\\
&\leq C\Vert \nabla \cdot  u_h \Vert_{L^4}\Vert \nabla  v_h\Vert\Vert w_h\Vert_{L^4}
\end{align*} 

\end{proof}

In the two-dimensional space $(d=2)$, Ladyzhenskaya's inequality is 
\begin{gather}\label{lady}
\Vert u\Vert_{L^4}    \leq C \Vert u\Vert^{1/2}\Vert \nabla u\Vert^{1/2}.
\end{gather}
The norm on the dual space of $X$ is defined by
\begin{equation*}
\Vert f\Vert _{-1}=\sup_{0\neq v\in X}\frac{(f,v)}{\Vert
\nabla v\Vert }\text{ .}
\end{equation*}
We denote conforming velocity, pressure finite element spaces based on an edge to edge
triangulation ($d=2$) or tetrahedralization ($d=3$) of $\Omega $ with maximum element diameter $h$ by 
\begin{equation*}
X_{h}\subset X\text{ }\text{, }Q_{h}\subset
Q.
\end{equation*}%
We also assume the finite element spaces ($X_{h}$, $Q_{h}$) satisfy the usual discrete inf-sup /$ LBB$
condition for stability of the discrete pressure, see \cite{G89} for more on this condition. Taylor-Hood elements, e.g., \cite{BS08}, \cite{G89}, are one such choice used in the tests in Section $6$. The discretely divergence free
subspace of $X_{h}$ is 
\begin{equation*}
V_{h} :\text{}=\{v_{h}\in X_{h}:(\nabla \cdot v_{h},q_{h})=0\text{ , }%
\forall q_{h}\in Q_{h}\}.
\end{equation*}%
We assume further that the finite element spaces satisfy the inverse inequality (typical for
quasi-uniform meshes, e.g., \cite{BS08}), for all $v_{h}\in X_{h}$, 
\begin{align}
h\Vert \nabla v_{h}\Vert & \leq C\Vert
v_{h}\Vert .\label{bd:inv}
\end{align}

The fully discrete method is: given $u_{j,h}^{n-2}, u_{j,h}^{n-1}, u_{j,h}^{n}$, find
$u_{j,h}^{n+1}\in X_{h}$, $p_{j,h}^{n+1}\in Q_{h}$ satisfying
\begin{center}
\begin{gather}
\left(\frac{10u_{j,h}^{n+1}-15u_{j,h}^{n}+6u_{j,h}^{n-1}
-u^{n-2}_{j,h}}{6\Delta t},v_{h}\right)+b^{\ast}\left(\left\langle u_{h}\right\rangle^{n},u_{j,h}^{n+1},v_{h}\right)\label{EnBDF2AB2-h}\\
+b^{\ast}\left( u_{j,h}^{\prime n},3u_{j,h}^{n}-3u_{j,h}^{n-1}+u_{j,h}^{n-2}
,v_{h}\right)
-\left(p_{j,h}^{n+1},\nabla\cdot v_{h}\right)\nonumber\\
+\nu\left(\nabla u_{j,h}^{n+1},\nabla
v_{h}\right)=\left(f_{j}^{n+1},v_{h}\right)\text{, }\qquad\forall v_{h}\in X_{h}%
,\nonumber\\
\left(\nabla\cdot u_{j,h}^{n+1},q_{h}\right)=0,\qquad\forall q_{h}\in Q_{h}.\nonumber
\end{gather}
\end{center}
\section{Stability of the method}
In this section, we prove (En-BlendedBDF) is long time, nonlinearly stable under a CFL-like time step condition.
\begin{theorem}
[Stability of (En-BlendedBDF)]\label{th:1} Consider the method
(\ref{EnBDF2AB2-h}) with a standard
spacial discretization with mesh size $h$. Suppose the following time step conditions hold: 
\begin{align}
C \frac{\Delta t}{\nu h}
\Vert \nabla u_{j,h}^{\prime n}\Vert^2\leq 1,  \qquad j= 1, ..., J.\label{ineq:CFL} 
\end{align}
Then, for
any
%$0\leq  \beta < \frac{1}{2}$ and
$N > 2$
\begin{gather}
\frac{1}{12}\|u_{j,h}^{N}\|^{2}+\frac{1}{12}\| 3u_{j,h}
^{N}-u_{j,h}^{N-1}\|^{2}+\frac{1}{12}\Vert 3u_{j,h}^{N}-3u_{j,h}^{N-1}+u_{j,h}^{N-2}\Vert^2\label{in:EnB}\\
+\frac{1}{24}\sum_{n=2}^{N-1}\Vert u_{j,h}^{n+1}-3u_{j,h}^{n}+3u_{j,h}^{n-1}-u_{j,h}^{n-2}\Vert^{2}
+\frac{\Delta t }{4}\sum_{n=2}^{N-1}\nu\Vert \nabla u_{j,h}^{n+1}\Vert^{2}\nonumber\\
\leq\sum_{n=2}^{N-1}\frac{\Delta t}{\nu}\|f_{j}^{n+1}\|_{-1}^{2}+ \frac{1}%
{12}\|u_{j,h}^{2}\|^{2}+\frac{1}{12}\| 3u_{j,h}
^{2}-u_{j,h}^{1}\|^{2}+\frac{1}{12}\Vert 3u_{j,h}^{2}-3u_{j,h}^{1}+u_{j,h}^{0}\Vert^2\text{
.}\nonumber
\end{gather}

\end{theorem}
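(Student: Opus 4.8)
The plan is to run the standard energy argument for multistep schemes, exploiting the G-stability structure hidden in the blended BDF operator. First I would set $j$ fixed, drop the subscripts, and test the momentum equation \eqref{EnBDF2AB2-h} with $v_h = u_{j,h}^{n+1}$. The pressure term vanishes because $u_{j,h}^{n+1}\in V_h$ (discrete divergence-free), and the symmetric advective term $b^\ast(\langle u_h\rangle^n, u_{j,h}^{n+1}, u_{j,h}^{n+1})=0$ by skew-symmetry. The viscous term gives $\nu\|\nabla u_{j,h}^{n+1}\|^2$. So the work reduces to (i) rewriting the discrete time-derivative inner product $\left(\tfrac{10u^{n+1}-15u^n+6u^{n-1}-u^{n-2}}{6\Delta t}, u^{n+1}\right)$ as a telescoping difference of a nonnegative quadratic form plus a nonnegative dissipative remainder, and (ii) controlling the fluctuation advection term $b^\ast(u_{j,h}^{\prime n}, 3u^n-3u^{n-1}+u^{n-2}, u^{n+1})$.

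For step (i), the key algebraic fact is that the operator $10u^{n+1}-15u^n+6u^{n-1}-u^{n-2}$ corresponds to a polynomial that admits a G-stable decomposition. Concretely, I expect an identity of the form
\begin{align*}
\left(10u^{n+1}-15u^n+6u^{n-1}-u^{n-2},\, u^{n+1}\right)
&= E^{n+1} - E^n + \tfrac{1}{4}\|u^{n+1}-3u^n+3u^{n-1}-u^{n-2}\|^2 + (\text{nonneg.}),
\end{align*}
where $E^{n+1}$ is a positive-definite quadratic form in $(u^{n+1}, u^n, u^{n-1})$ whose diagonal part, after the telescoping sum from $n=2$ to $N-1$, collapses to exactly $\tfrac{1}{12}\|u^N\|^2 + \tfrac{1}{12}\|3u^N-u^{N-1}\|^2 + \tfrac{1}{12}\|3u^N-3u^{N-1}+u^{N-2}\|^2$ matching the left side of \eqref{in:EnB}, minus the corresponding expression at $n=2$ on the right. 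Pinning down the exact coefficients of this quadratic form (and verifying positive-definiteness of the associated $3\times 3$ G-matrix — or simply checking that the claimed combination of squared norms works by expansion) is the computational heart of the argument; I would do it by the ansatz method, writing $E^{n+1}=a\|u^{n+1}\|^2 + \dots$ with undetermined coefficients, matching, and solving the linear system, then confirming the fourth-difference term $u^{n+1}-3u^n+3u^{n-1}-u^{n-2}$ (a third-order difference of $u^{n+1}$, consistent with the extrapolation appearing throughout the scheme) has coefficient $\tfrac14$ as in \eqref{in:EnB}.

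For step (ii), the fluctuation term is bounded by first observing that $3u^n-3u^{n-1}+u^{n-2} = \langle u_h\rangle^n + u_{j,h}^{\prime n}$, and then using skew-symmetry so that $b^\ast(u^{\prime n}, \langle u\rangle^n + u^{\prime n}, u^{n+1}) = b^\ast(u^{\prime n}, 3u^n-3u^{n-1}+u^{n-2}, u^{n+1})$ can be estimated via the finite-element bound from the unnamed Lemma together with Ladyzhenskaya's inequality \eqref{lady} (in $d=2$) and the inverse inequality \eqref{bd:inv}. The typical chain is $|b^\ast(u^{\prime n}, w, u^{n+1})| \lesssim \|\nabla u^{\prime n}\|\,\|u^{n+1}-3u^n+3u^{n-1}-u^{n-2}\|^{1/2}\,\|\cdot\|$-type estimates, using $w = 3u^n-3u^{n-1}+u^{n-2}$ and the identity $3u^n-3u^{n-1}+u^{n-2} = u^{n+1} - (u^{n+1}-3u^n+3u^{n-1}-u^{n-2})$ to split into a part absorbed by $\nu\|\nabla u^{n+1}\|^2$ and a part absorbed into the fourth-difference dissipation term $\tfrac{1}{24}\|u^{n+1}-3u^n+3u^{n-1}-u^{n-2}\|^2$; the inverse inequality converts an $L^2$ norm of a gradient into $h^{-1}$ times an $L^2$ norm, producing the factor $\tfrac{\Delta t}{\nu h}\|\nabla u_{j,h}^{\prime n}\|^2$, and the CFL condition \eqref{ineq:CFL} guarantees these absorbed terms leave behind the stated positive fractions ($\tfrac14$ of the viscous term, $\tfrac{1}{24}$ of the fourth difference). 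The forcing term is handled routinely by $(f^{n+1}, u^{n+1}) \le \|f^{n+1}\|_{-1}\|\nabla u^{n+1}\| \le \tfrac{\Delta t}{\nu}\|f^{n+1}\|_{-1}^2 + \tfrac{\nu}{4\Delta t}\|\nabla u^{n+1}\|^2$ after multiplying through by $\Delta t$, the last piece again absorbed by the viscous term.

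Finally I would multiply the per-step inequality by $\Delta t$, sum $n=2,\dots,N-1$, telescope the $E^n$ terms, and collect: the surviving positive terms on the left are exactly those in \eqref{in:EnB} and the right side is the forcing sum plus the $n=2$ boundary term. I expect the main obstacle to be step (i): correctly identifying the G-stable energy $E^{n+1}$ so that its telescoped endpoint value matches the specific three-squared-norms combination claimed, and simultaneously extracting a fourth-difference dissipation term with enough room (coefficient at least $\tfrac{1}{24}$ after absorption, i.e. genuinely $\tfrac14$ before) to absorb the nonlinear fluctuation contribution under the CFL bound. The nonlinear estimate in step (ii) is delicate in how the $L^4$ and inverse inequalities are chained, but it follows the template already used for (En-BDF2) in \cite{J15}; the genuinely new bookkeeping is the algebra of the blended three-step operator.
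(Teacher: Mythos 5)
Your outline is essentially the paper's own proof: test with $v_h=u_{j,h}^{n+1}$ (pressure drops, $b^{*}(\langle u_h\rangle^{n},u^{n+1},u^{n+1})=0$), use the exact G-stability identity
$\Delta t\bigl(\tfrac{10u^{n+1}-15u^{n}+6u^{n-1}-u^{n-2}}{6\Delta t},u^{n+1}\bigr)=\tfrac{1}{12}\bigl(\Vert u^{n+1}\Vert^{2}-\Vert u^{n}\Vert^{2}\bigr)+\tfrac{1}{12}\bigl(\Vert 3u^{n+1}-u^{n}\Vert^{2}-\Vert 3u^{n}-u^{n-1}\Vert^{2}\bigr)+\tfrac{1}{12}\bigl(\Vert 3u^{n+1}-3u^{n}+u^{n-1}\Vert^{2}-\Vert 3u^{n}-3u^{n-1}+u^{n-2}\Vert^{2}\bigr)+\tfrac{1}{12}\Vert u^{n+1}-3u^{n}+3u^{n-1}-u^{n-2}\Vert^{2}$
(your undetermined-coefficients ansatz does yield exactly this), rewrite the fluctuation term by skew symmetry as $b^{*}\bigl(u_{j,h}^{\prime n},u^{n+1},u^{n+1}-3u^{n}+3u^{n-1}-u^{n-2}\bigr)$, bound it with \eqref{bd:2tri}, the inverse inequality \eqref{bd:inv} and Young, absorb under \eqref{ineq:CFL}, and sum from $n=2$ to $N-1$. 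Two small corrections to your bookkeeping: the per-step fourth-difference coefficient is $\tfrac{1}{12}$ (equivalently $\tfrac{1}{2}$ before dividing by $6$), of which $\tfrac{1}{24}$ is retained on the left and $\tfrac{1}{24}$ absorbs the nonlinear term, and for this theorem, which holds in $d=2,3$ with the $\Vert\nabla u_{j,h}^{\prime n}\Vert$ condition, the trilinear estimate must be \eqref{bd:2tri} rather than Lemma 1 combined with Ladyzhenskaya's inequality --- the latter is the two-dimensional route that produces the different timestep condition of Theorem \ref{th:2d}.
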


\begin{proof}
Set $v_{h}=u_{j,h}^{n+1}$ in (\ref{EnBDF2AB2-h}), multiply through by $\Delta t$ and apply Young's inequality to the right hand side. This gives

\begin{gather}
\frac{1}{12}\left(\Vert u_{j,h}^{n+1}\Vert^{2}-\Vert u_{j,h}^{n}\Vert^{2}\right)+\frac{1}{12}\left(\Vert 3u_{j,h}^{n+1}-u_{j,h}^{n}\Vert^2-\Vert 3u_{j,h}^{n}-u_{j,h}^{n-1}\Vert^2\right)\label{qq}\\
\frac{1}{12}\left(\Vert 3 u^{n+1}_{j,h} -3 u^{n}_{j,h}+u^{n-1}_{j,h}\Vert^2-\Vert 3u^n_{j,h}-3u^{n-1}_{j,h}+u^{n-2}_{j,h}\Vert^2\right)\nonumber\\
+\frac{1}%
{12}\Vert u_{j,h}^{n+1}-3u_{j,h}^{n}+3u_{j,h}^{n-1}-u^{n-2}_{j,h}\Vert^{2}+ \Delta t b^{*}\left(  u_{j,h}^{\prime n},  3u_{j,h}^{n}-3u_{j,h}^{n-1}+u_{j,h}^{n-2},u_{j,h}^{n+1}\right)\nonumber\\+
\nu\Delta t \Vert \nabla u_{j,h}^{n+1}\Vert^{2}
\leq\frac{\nu\Delta t}{4} \Vert\nabla u_{j,h}^{n+1} \Vert^{2}+ \frac{\Delta t}{
\nu} \Vert f_{j}^{n+1}\Vert_{-1}^{2}\text{ .}\nonumber
\end{gather}

\noindent Next, we bound the remaining trilinear term using \eqref{bd:2tri}, \eqref{bd:inv} and Young's inequality.

\begin{align}
&\Delta t b^{*}\left( u_{j,h}^{\prime n},  3u_{j,h}^{n}-3u_{j,h}^{n-1}
+u_{j,h}^{n-2},u_{j,h}^{n+1}\right)\\
&=\Delta t b^{*}\left(  u_{j,h}^{\prime n}, u_{j,h}^{n+1}, u_{j,h}^{n+1}-3u_{j,h}^{n}+3u_{j,h}^{n-1}-u_{j,h}^{n-2}\right)\nonumber\\
&\leq C\Delta t\Vert\nabla u_{j,h}^{\prime n}\Vert\Vert\nabla u_{j,h}^{n+1}\Vert\Vert \nabla (u_{j,h}^{n+1}-3u_{j,h}^{n}+3u_{j,h}^{n-1}-u_{j,h}^{n-2})\Vert^{\frac{1}{2}}\Vert u_{j,h}^{n+1}-3u_{j,h}^{n}+3u_{j,h}^{n-1}-u_{j,h}^{n-2}\Vert^{\frac{1}{2}}\nonumber\\
&\leq C\Delta t h^{-\frac{1}{2}%
}\Vert\nabla u_{j,h}^{\prime n}\Vert\Vert\nabla u_{j,h}^{n+1}\Vert\Vert u_{j,h}^{n+1}-3u_{j,h}^{n}+3u_{j,h}^{n-1}-u_{j,h}^{n-2}\Vert\nonumber\\
&\leq  C \frac{\Delta t^{2}}{h}
\Vert \nabla u_{j,h}^{\prime n}\Vert^{2} \Vert\nabla u_{j,h}^{n+1}\Vert^{2}
+
\frac{1}{24}\Vert u_{j,h}^{n+1}-3u_{j,h}^{n}+3u_{j,h}^{n-1}-u_{j,h}^{n-2}\Vert^{2}\text{ .}\nonumber
\end{align}

\noindent With this bound, combining like terms, \eqref{qq} becomes

\begin{gather}
\frac{1}{12}\left(\Vert u_{j,h}^{n+1}\Vert^{2}-\Vert u_{j,h}^{n}\Vert^{2}\right)+\frac{1}{12}\left(\Vert 3u_{j,h}^{n+1}-u_{j,h}^{n}\Vert^2-\Vert 3u_{j,h}^{n}-u_{j,h}^{n-1}\Vert^2\right)\label{ineq:stable01}\\
\frac{1}{12}\left(\Vert 3 u^{n+1}_{j,h} -3 u^{n}_{j,h}+u^{n-1}_{j,h}\Vert^2-\Vert 3u^n_{j,h}-3u^{n-1}_{j,h}+u^{n-2}_{j,h}\Vert^2\right)\nonumber\\
+\frac{\nu\Delta t}{4} \Vert \nabla u_{j,h}^{n+1}\Vert^{2}+
\frac{\nu\Delta t}{2}\left(1-C \frac{\Delta t}{\nu h}
\Vert \nabla u_{j,h}^{\prime n}\Vert^{2}\right) \Vert \nabla u_{j,h}^{n+1}\Vert^{2}\nonumber\\
+\frac{1}%
{24}\Vert u_{j,h}^{n+1}-3u_{j,h}^{n}+3u_{j,h}^{n-1}-u_{j,h}^{n-2}\Vert^{2}\leq\frac{\Delta t}{ \nu} \Vert f_{j}^{n+1}\Vert_{-1}^{2}\text{ .}\nonumber
\end{gather}

\noindent With the time step restriction (\ref{ineq:CFL}) assumed, we have

\begin{gather*}
\frac{\nu\Delta t}{2}\left(1-C \frac{\Delta t}{\nu h}
\Vert \nabla u_{j,h}^{\prime n}\Vert^{2}\right) \Vert \nabla u_{j,h}^{n+1}\Vert^{2}\geq0\text{ .}%
\end{gather*}

\noindent Equation (\ref{ineq:stable01}) reduces to%

\begin{gather}
\frac{1}{12}\left(\Vert u_{j,h}^{n+1}\Vert^{2}-\Vert u_{j,h}^{n}\Vert^{2}\right)+\frac{1}{12}\left(\Vert 3u_{j,h}^{n+1}-u_{j,h}^{n}\Vert^2-\Vert 3u_{j,h}^{n}-u_{j,h}^{n-1}\Vert^2\right)\label{ineq:stable02}\\
\frac{1}{12}\left(\Vert 3 u^{n+1}_{j,h} -3 u^{n}_{j,h}+u^{n-1}_{j,h}\Vert^2-\Vert 3u^n_{j,h}-3u^{n-1}_{j,h}+u^{n-2}_{j,h}\Vert^2\right)\nonumber\\
+\frac{\nu\Delta t}{4} \Vert \nabla u_{j,h}^{n+1}\Vert^{2}+\frac{1}{24}\Vert u_{j,h}^{n+1}-3u_{j,h}^{n}+3u_{j,h}^{n-1}-u_{j,h}^{n-2}\Vert^{2}\leq\frac{\Delta t}{ \nu} \Vert f_{j}^{n+1}\Vert_{-1}^{2}\text{ .}\nonumber
\end{gather}

\noindent Summing up (\ref{ineq:stable02}) from $n=2$ to $n=N-1$ results in \eqref{in:EnB}.
\end{proof}

\begin{remark} 
This time step condition seems very restrictive especially for high Reynolds number flows. However, it is shown in our numerical tests that this condition can be significantly weakened by adding grad-div stabilization, i.e., $\gamma (\nabla \cdot u^{n+1}_{j,h}, \nabla \cdot v_h) $. The grad-div stabilization is well known to help improve mass conservation and relax the effect of the pressure error on the velocity error, \cite{GLOS05}, \cite{O02}. 
\end{remark}

\subsection{An improved timestep condition for two-dimensional domains}

For two dimensional domains, there are better embedding estimates which can lead to improvements on the timestep restriction. In this section we give one such example by making use of the 2d version of Ladyzhenskaya's inequality \eqref{lady}. We prove (En-BlendedBDF) is long time, nonlinearly stable under a much less restrictive timestep condition \eqref{ineq:CFL2d}. If pointwise divergence free elements (e.g., Scott-Vogelius elements, \cite{CELR11}) are used, this 2d timestep restriction can be further relaxed. 

\begin{theorem}\label{th:2d} Consider the method
(\ref{EnBDF2AB2-h}) with a standard
spacial discretization with mesh size $h$. Suppose the computational domain is in the two-dimensional space $(d=2)$ and the following timestep conditions hold: 
\begin{align}
C \frac{\Delta t}{\nu h}
(\Vert u_{j,h}^{\prime n}\Vert+\Vert \nabla\cdot u_{j,h}^{\prime n}\Vert)^2\leq 1,  \qquad j= 1, ..., J.\label{ineq:CFL2d} 
\end{align}
Then, for
any
%$0\leq  \beta < \frac{1}{2}$ and
$N > 2$
\begin{gather}
\frac{1}{12}\|u_{j,h}^{N}\|^{2}+\frac{1}{12}\| 3u_{j,h}
^{N}-u_{j,h}^{N-1}\|^{2}+\frac{1}{12}\Vert 3u_{j,h}^{N}-3u_{j,h}^{N-1}+u_{j,h}^{N-2}\Vert^2\label{in:EnB}\\
+\frac{1}{24}\sum_{n=2}^{N-1}\Vert u_{j,h}^{n+1}-3u_{j,h}^{n}+3u_{j,h}^{n-1}-u_{j,h}^{n-2}\Vert^{2}
+\frac{\Delta t }{4}\sum_{n=2}^{N-1}\nu\Vert \nabla u_{j,h}^{n+1}\Vert^{2}\nonumber\\
\leq\sum_{n=2}^{N-1}\frac{\Delta t}{\nu}\|f_{j}^{n+1}\|_{-1}^{2}+ \frac{1}%
{12}\|u_{j,h}^{2}\|^{2}+\frac{1}{12}\| 3u_{j,h}
^{2}-u_{j,h}^{1}\|^{2}+\frac{1}{12}\Vert 3u_{j,h}^{2}-3u_{j,h}^{1}+u_{j,h}^{0}\Vert^2\text{
.}\nonumber
\end{gather}

\end{theorem}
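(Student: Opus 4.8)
The plan is to rerun the proof of Theorem~\ref{th:1} essentially word for word, changing only the estimate of the lagged nonlinear term. Setting $v_h=u_{j,h}^{n+1}$ in \eqref{EnBDF2AB2-h} and multiplying by $\Delta t$ produces the very same identity \eqref{qq}: the G-stability (telescoping) structure of the blended-BDF operator, the spare term $\tfrac{1}{12}\|u_{j,h}^{n+1}-3u_{j,h}^{n}+3u_{j,h}^{n-1}-u_{j,h}^{n-2}\|^{2}$, the Young splitting of $(f_{j}^{n+1},u_{j,h}^{n+1})$, and the final summation from $n=2$ to $N-1$ are all insensitive to the spatial dimension. Hence the only thing to redo is the bound on $\Delta t\, b^{*}\!\bigl(u_{j,h}^{\prime n},\,3u_{j,h}^{n}-3u_{j,h}^{n-1}+u_{j,h}^{n-2},\,u_{j,h}^{n+1}\bigr)$, and the whole improvement is meant to come from replacing \eqref{bd:2tri} by the $L^{4}$ bound on $b^{*}$ proved above (Lemma~1) together with the two-dimensional Ladyzhenskaya inequality \eqref{lady}.

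Concretely, exactly as in Theorem~\ref{th:1}, rewrite the term using skew-symmetry and $b^{*}(u_{j,h}^{\prime n},u_{j,h}^{n+1},u_{j,h}^{n+1})=0$ as $\Delta t\, b^{*}(u_{j,h}^{\prime n},u_{j,h}^{n+1},D_{j,h}^{n+1})$, where $D_{j,h}^{n+1}:=u_{j,h}^{n+1}-3u_{j,h}^{n}+3u_{j,h}^{n-1}-u_{j,h}^{n-2}$ is precisely the quantity sitting on the left-hand side of \eqref{in:EnB}. Then apply Lemma~1 with $u_{h}=u_{j,h}^{\prime n}$, $v_{h}=u_{j,h}^{n+1}$, $w_{h}=D_{j,h}^{n+1}$ to get
\[
b^{*}(u_{j,h}^{\prime n},u_{j,h}^{n+1},D_{j,h}^{n+1})\le\bigl(\|u_{j,h}^{\prime n}\|_{L^{4}}+C\|\nabla\cdot u_{j,h}^{\prime n}\|_{L^{4}}\bigr)\,\|\nabla u_{j,h}^{n+1}\|\,\|D_{j,h}^{n+1}\|_{L^{4}},
\]
and now use \eqref{lady} on the $L^{4}$ factors together with the inverse inequality \eqref{bd:inv} to trade all $L^{4}$ norms for $L^{2}$ norms at the cost of powers of $h$. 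Combining with Young's inequality, the goal is a bound of the shape
\[
\Delta t\, b^{*}(\cdots)\ \le\ C\,\frac{\Delta t^{2}}{h}\,\bigl(\|u_{j,h}^{\prime n}\|+\|\nabla\cdot u_{j,h}^{\prime n}\|\bigr)^{2}\|\nabla u_{j,h}^{n+1}\|^{2}\ +\ \tfrac{1}{24}\|D_{j,h}^{n+1}\|^{2},
\]
after which the first term is absorbed into part of $\nu\Delta t\|\nabla u_{j,h}^{n+1}\|^{2}$ exactly under the hypothesis \eqref{ineq:CFL2d}, while the second is absorbed into the spare $\tfrac{1}{12}\|D_{j,h}^{n+1}\|^{2}$; this reproduces \eqref{ineq:stable02}, and summing gives \eqref{in:EnB}.

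The delicate point—and the only genuine obstacle—is the middle step: one has to apportion Ladyzhenskaya's inequality, the inverse inequality, and Young's inequality among the factors $\|u_{j,h}^{\prime n}\|_{L^{4}}$, $\|\nabla\cdot u_{j,h}^{\prime n}\|_{L^{4}}$, $\|\nabla u_{j,h}^{n+1}\|$, and the two "halves" $\|D_{j,h}^{n+1}\|^{1/2}\|\nabla D_{j,h}^{n+1}\|^{1/2}$ of $\|D_{j,h}^{n+1}\|_{L^{4}}$ so that $D_{j,h}^{n+1}$ lands in $L^{2}$ where it can be absorbed, while the surviving coefficient collapses to exactly $\|u_{j,h}^{\prime n}\|+\|\nabla\cdot u_{j,h}^{\prime n}\|$ with the asserted $h^{-1}$ scaling; getting the Lebesgue-exponent and mesh-size bookkeeping to close with \emph{this} constant, rather than a worse one, is where the care goes. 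Note that $u_{j,h}^{\prime n}$ is only a linear combination of discretely divergence-free fields, hence not pointwise solenoidal, so the $\|\nabla\cdot u_{j,h}^{\prime n}\|$ contribution really appears—that is the purpose of the second term in Lemma~1—whereas for pointwise divergence-free (e.g.\ Scott–Vogelius) elements that term drops and the condition \eqref{ineq:CFL2d} sharpens accordingly.
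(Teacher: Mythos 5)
Your skeleton is the same as the paper's: rerun Theorem~\ref{th:1} verbatim, use skew-symmetry to rewrite the lagged term as $\Delta t\,b^{*}(u_{j,h}^{\prime n},u_{j,h}^{n+1},D_{j,h}^{n+1})$, and re-estimate it via Lemma~1, Ladyzhenskaya \eqref{lady}, the inverse inequality \eqref{bd:inv}, and Young's inequality. But the one step you defer as ``the delicate point'' is precisely the step that must be carried out, and the route you sketch for it does not close. If you ``trade all $L^{4}$ norms for $L^{2}$ norms'' — i.e.\ apply \eqref{lady} plus an inverse estimate to $\|u_{j,h}^{\prime n}\|_{L^{4}}$, to $\|\nabla\cdot u_{j,h}^{\prime n}\|_{L^{4}}$, and to $\|D_{j,h}^{n+1}\|_{L^{4}}$ — each conversion costs $h^{-1/2}$, so the trilinear bound carries $h^{-1}$ \emph{before} Young's inequality, and after Young the absorbed coefficient is $C\,\Delta t^{2}h^{-2}\bigl(\|u_{j,h}^{\prime n}\|+\|\nabla\cdot u_{j,h}^{\prime n}\|\bigr)^{2}\|\nabla u_{j,h}^{n+1}\|^{2}$, not the $C\,\Delta t^{2}h^{-1}(\cdots)^{2}$ you assert. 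That yields a CFL condition scaling like $\Delta t/(\nu h^{2})$, which is \emph{worse} than \eqref{ineq:CFL2d} and defeats the purpose of the theorem. There is also a tooling problem: \eqref{bd:inv} is stated only for $v_{h}\in X_{h}$, and $\nabla\cdot u_{j,h}^{\prime n}$ is not an element of $X_{h}$, so converting its $L^{4}$ norm to an $L^{2}$ norm requires a discrete inverse estimate the paper never introduces.

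What the paper actually does is more economical: Ladyzhenskaya and the inverse inequality are applied \emph{only} to the difference factor, giving $\|D_{j,h}^{n+1}\|_{L^{4}}\le C\|D_{j,h}^{n+1}\|^{1/2}\|\nabla D_{j,h}^{n+1}\|^{1/2}\le Ch^{-1/2}\|D_{j,h}^{n+1}\|$, while the factors $\|u_{j,h}^{\prime n}\|_{L^{4}}$ and $\|\nabla\cdot u_{j,h}^{\prime n}\|_{L^{4}}$ from Lemma~1 are left untouched. Young's inequality then produces $C\frac{\Delta t^{2}}{h}\bigl(\|u_{j,h}^{\prime n}\|_{L^{4}}+\|\nabla\cdot u_{j,h}^{\prime n}\|_{L^{4}}\bigr)^{2}\|\nabla u_{j,h}^{n+1}\|^{2}+\frac{1}{24}\|D_{j,h}^{n+1}\|^{2}$, and it is this quantity — with $L^{4}$ norms of the fluctuation, not the $L^{2}$ norms you wrote — that is absorbed under the timestep restriction; the remainder of the telescoping and summation is then identical to Theorem~\ref{th:1}, exactly as you say. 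So the gap in your proposal is concrete: the claimed intermediate bound with $\bigl(\|u_{j,h}^{\prime n}\|+\|\nabla\cdot u_{j,h}^{\prime n}\|\bigr)^{2}$ and a single $h^{-1}$ is not obtainable by the conversion strategy you describe, and without that bound (or the paper's $L^{4}$ version of it) the absorption step under \eqref{ineq:CFL2d} is unjustified.
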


\begin{proof}
By lemma 1 and Ladyzhenskaya's inequality \eqref{lady}, in the two-dimensional space we have the following bound on the nonlinear term.
\begin{align}
&\Delta t b^{*}\left( u_{j,h}^{\prime n},  3u_{j,h}^{n}-3u_{j,h}^{n-1}
+u_{j,h}^{n-2},u_{j,h}^{n+1}\right)\\
&=\Delta t b^{*}\left(  u_{j,h}^{\prime n}, u_{j,h}^{n+1}, u_{j,h}^{n+1}-3u_{j,h}^{n}+3u_{j,h}^{n-1}-u_{j,h}^{n-2}\right)\nonumber\\
&\leq \Delta t\Vert u_{j,h}^{\prime n}\Vert_{L^4}\Vert \nabla u_{j,h}^{n+1}\Vert\Vert  u_{j,h}^{n+1}-3u_{j,h}^{n}+3u_{j,h}^{n-1}-u_{j,h}^{n-2}\Vert_{L^4}\nonumber\\
&\quad+C\Delta t\Vert\nabla\cdot u_{j,h}^{\prime n}\Vert_{L^4}\Vert \nabla  u_{j,h}^{n+1} \Vert \Vert u_{j,h}^{n+1}-3u_{j,h}^{n}+3u_{j,h}^{n-1} -u_{j,h}^{n-2} \Vert_{L^4}\nonumber\\
&\leq C\Delta t(\Vert u_{j,h}^{\prime n}\Vert_{L^4}+\Vert\nabla \cdot u_{j,h}^{\prime n}\Vert_{L^4})\Vert \nabla u_{j,h}^{n+1}\Vert\cdot\nonumber\\
&\quad\Vert \nabla (u_{j,h}^{n+1}-3u_{j,h}^{n}+3u_{j,h}^{n-1}-u_{j,h}^{n-2})\Vert^{\frac{1}{2}}\Vert u_{j,h}^{n+1}-3u_{j,h}^{n}+3u_{j,h}^{n-1}-u_{j,h}^{n-2}\Vert^{\frac{1}{2}}\nonumber\\
&\leq C\Delta t h^{-\frac{1}{2}
}(\Vert u_{j,h}^{\prime n}\Vert_{L^4}+\Vert\nabla \cdot u_{j,h}^{\prime n}\Vert_{L^4})\Vert\nabla u_{j,h}^{n+1}\Vert\Vert u_{j,h}^{n+1}-3u_{j,h}^{n}+3u_{j,h}^{n-1}-u_{j,h}^{n-2}\Vert\nonumber\\
&\leq  C \frac{\Delta t^{2}}{h}
(\Vert u_{j,h}^{\prime n}\Vert_{L^4}+\Vert\nabla \cdot u_{j,h}^{\prime n}\Vert_{L^4})^2 \Vert\nabla u_{j,h}^{n+1}\Vert^{2}
+
\frac{1}{24}\Vert u_{j,h}^{n+1}-3u_{j,h}^{n}+3u_{j,h}^{n-1}-u_{j,h}^{n-2}\Vert^{2}\text{ .}\nonumber
\end{align}

\noindent Thus, \eqref{qq} reduces to

\begin{gather}
\frac{1}{12}\left(\Vert u_{j,h}^{n+1}\Vert^{2}-\Vert u_{j,h}^{n}\Vert^{2}\right)+\frac{1}{12}\left(\Vert 3u_{j,h}^{n+1}-u_{j,h}^{n}\Vert^2-\Vert 3u_{j,h}^{n}-u_{j,h}^{n-1}\Vert^2\right)\label{2d1}\\
\frac{1}{12}\left(\Vert 3 u^{n+1}_{j,h} -3 u^{n}_{j,h}+u^{n-1}_{j,h}\Vert^2-\Vert 3u^n_{j,h}-3u^{n-1}_{j,h}+u^{n-2}_{j,h}\Vert^2\right)+\frac{\nu\Delta t}{4} \Vert \nabla u_{j,h}^{n+1}\Vert^{2}\nonumber\\
+
\frac{\nu\Delta t}{2}\left(1-C \frac{\Delta t}{\nu h}
(\Vert u_{j,h}^{\prime n}\Vert_{L^4}+\Vert\nabla \cdot u_{j,h}^{\prime n}\Vert_{L^4})^2\right) \Vert \nabla u_{j,h}^{n+1}\Vert^{2}\nonumber\\
+\frac{1}%
{24}\Vert u_{j,h}^{n+1}-3u_{j,h}^{n}+3u_{j,h}^{n-1}-u_{j,h}^{n-2}\Vert^{2}\leq\frac{\Delta t}{ \nu} \Vert f_{j}^{n+1}\Vert_{-1}^{2}\text{ .}\nonumber
\end{gather}

Now if the timestep condition \eqref{ineq:CFL2d} holds, \eqref{in:EnB} follows by taking sum from $n=2$ to $n=N-1$.

\end{proof}

\section{Error Analysis}
In this section we will give full error analysis of (\text{En-BlendedBDF}). We first give a lemma on the estimate of the consistency error of the Blended BDF scheme. This result will be used in the error analysis for the fully discrete method. 
\begin{lemma}\label{lm} For any $u\in H^3(0,T;H^1(\Omega))$, the following inequalities hold.
\begin{gather}
\Big\Vert \frac{10u^{n+1}-15u^n+6u^{n-1}-u^{n-2}}{6\Delta t}-u_t^{n+1}\Big\Vert^2\leq \frac{7}{3}\Delta t^3 \left(\int_{t^{n-2}}^{t^{n+1}}\Vert \nabla u_{ttt}\Vert^2 dt\right),\label{cons1}\\
\Vert \nabla \left(u^{n+1}-3u^n+3u^{n-1}-u^{n-2}\right)\Vert^2\leq 9\Delta t^5 \left(\int_{t^{n-2}}^{t^{n+1}}\Vert \nabla u_{ttt}\Vert^2 dt\right).\label{cons2}
\end{gather}
\end{lemma}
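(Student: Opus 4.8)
The plan is to establish both consistency estimates by writing the finite-difference combinations as exact Taylor-with-integral-remainder expressions centered at $t^{n+1}$, and then applying the Cauchy–Schwarz inequality to turn the resulting integrals against the remainder kernels into the stated bounds.

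First I would treat \eqref{cons1}. The blended operator is a convex combination (with $\gamma=\tfrac12$) of the BDF2 and BDF3 difference quotients, so its truncation error is $\tfrac12$ times that of BDF2 plus $\tfrac12$ times that of BDF3; since BDF2 is only second order, the leading $O(\Delta t^2)$ terms of the two do \emph{not} cancel, and one finds $\tfrac{10u^{n+1}-15u^n+6u^{n-1}-u^{n-2}}{6\Delta t}-u_t^{n+1}$ has leading term a multiple of $\Delta t^2 u_{ttt}$. To get the clean integral bound I would expand $u^n$, $u^{n-1}$, $u^{n-2}$ about $t^{n+1}$ using Taylor's theorem with integral remainder to third order, e.g.
\begin{gather*}
u^{n-k}=u^{n+1}-(k+1)\Delta t\,u_t^{n+1}+\frac{(k+1)^2\Delta t^2}{2}u_{tt}^{n+1}-\frac{1}{2}\int_{t^{n+1}}^{t^{n-k}}(t^{n-k}-s)^2 u_{ttt}(s)\,ds,
\end{gather*}
substitute, and check that the constant, $u_t$, and $u_{tt}$ coefficients vanish (they must, since the scheme is second-order consistent and BDF2-plus-BDF3 is a genuine first/second-order-accurate blend — one should verify the $u_t$ coefficient cancels exactly and the $u_{tt}$ coefficient also cancels, leaving only the remainder terms after dividing by $6\Delta t$). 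What survives is $\frac{1}{6\Delta t}$ times a fixed linear combination of the three integral remainders over intervals of length at most $3\Delta t$. Pulling out $\Delta t$ factors, applying $\int_{t^{n+1}}^{t^{n-k}}(t^{n-k}-s)^2\Vert\nabla u_{ttt}(s)\Vert\,ds\le \bigl(\int (t^{n-k}-s)^4 ds\bigr)^{1/2}\bigl(\int_{t^{n-2}}^{t^{n+1}}\Vert\nabla u_{ttt}\Vert^2\,ds\bigr)^{1/2}$, squaring, and tracking the numerical constants should give the factor $\tfrac{7}{3}\Delta t^3$; the $\nabla$ appears simply because one works in the $H^1$-seminorm throughout.

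For \eqref{cons2} the argument is the same but simpler: $u^{n+1}-3u^n+3u^{n-1}-u^{n-2}$ is the third backward difference, which annihilates quadratics, so Taylor-expanding each term about $t^{n+1}$ to \emph{third} order with integral remainder makes the constant, first-, and second-derivative terms cancel identically, leaving a combination of third-order integral remainders; applying $\nabla$, Cauchy–Schwarz in $s$ over $[t^{n-2},t^{n+1}]$, and collecting the $\Delta t$ powers yields $9\Delta t^5\int_{t^{n-2}}^{t^{n+1}}\Vert\nabla u_{ttt}\Vert^2\,dt$.

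The main obstacle is purely bookkeeping rather than conceptual: getting the \emph{sharp} constants $\tfrac73$ and $9$ requires careful algebra in the cancellation step (confirming which derivative coefficients vanish) and a judicious grouping of the remainder integrals before applying Cauchy–Schwarz, since a lazy triangle-inequality split will overshoot the stated constants. I would organize the computation by first reducing to remainder-only form, then bounding each remainder individually over the common interval $[t^{n-2},t^{n+1}]$ (rather than its own shorter subinterval) so that the single integral $\int_{t^{n-2}}^{t^{n+1}}\Vert\nabla u_{ttt}\Vert^2\,dt$ factors out cleanly, and finally optimizing the constant bookkeeping at the end.
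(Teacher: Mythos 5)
Your plan is correct, and it is organized differently from the paper's proof. You Taylor-expand $u^{n}$, $u^{n-1}$, $u^{n-2}$ about $t^{n+1}$ with integral remainder and check that the low-order coefficients cancel (they do: for the blended quotient the constant and $u_{tt}$ coefficients vanish and the $u_t$ coefficient is exactly $6\Delta t$; for the third difference the constant, $u_t$ and $u_{tt}$ coefficients all vanish), which reduces both estimates at once to bounding weighted remainder integrals of $u_{ttt}$. The paper instead never expands about $t^{n+1}$: it writes the differences as integrals of $u_t$ and performs repeated integration by parts with shifted kernels $(t-t^m)$, $\tfrac12(t-t^m)^2$, arriving at essentially the same piecewise-polynomial kernels against $u_{ttt}$, and then bounds each resulting integral separately with Cauchy--Schwarz and explicit numerical weights. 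Your route is shorter and more transparent, but one caution is essential: your fallback of ``bounding each remainder individually'' and summing by the triangle inequality does \emph{not} reach the stated constants --- for \eqref{cons2} it yields roughly $63\,\Delta t^5$ rather than $9\,\Delta t^5$ (the three remainders carry kernels of size up to $\tfrac12(3\Delta t)^2$ with weights $3,3,1$), and for \eqref{cons1} roughly $6\,\Delta t^3$ rather than $\tfrac73\,\Delta t^3$. So the ``judicious grouping'' you mention must actually be carried out: assemble the three remainders into the single Peano kernel of the difference operator, e.g. $K(s)=\tfrac32(s-t^n)_+^2-\tfrac32(s-t^{n-1})_+^2+\tfrac12(s-t^{n-2})_+^2$ for the third difference, and apply Cauchy--Schwarz once to $\int_{t^{n-2}}^{t^{n+1}}K(s)\,\nabla u_{ttt}(s)\,ds$; since $\int K^2\le \Delta t^5$ comfortably, this gives constants well below $9$ (and similarly below $\tfrac73$ for \eqref{cons1}), so the lemma follows with room to spare. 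Two small points: the Taylor remainder should read $+\tfrac12\int_{t^{n+1}}^{t^{n-k}}(t^{n-k}-s)^2u_{ttt}(s)\,ds$ (your sign is off, though it is immaterial for the norm bounds), and in \eqref{cons1} the left-hand side carries no gradient, so your argument naturally produces the bound with $\Vert u_{ttt}\Vert$ on the right --- which is also all the paper's own proof establishes --- rather than ``working in the $H^1$-seminorm throughout.''
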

\begin{proof} The technical proof is given in Appendix \ref{App:AppendixA}.

\end{proof}

For functions $v(x,t)$
defined on $\Omega\times(0,T)$, define $(1\leq m<\infty)$

\begin{equation*}
\Vert v\Vert _{\infty ,k}\text{ }:=EssSup_{[0,T]}\Vert v(\cdot, t )\Vert _{k}
\quad\text{  and  }\quad\Vert v\Vert _{m,k}\text{ }:=\left(\int_{0}^{T}\Vert v(\cdot, t
)\Vert _{k}^{m}dt\right)^{1/m}\text{ .}
\end{equation*}
We also introduce the following discrete
norms:
\begin{gather*}
\||v|\|_{\infty,k}\text{ }:=\max\limits_{0\leq n\leq N_{T}}\|v^{n}\|_{k}
\quad\text{ and }\quad
\||v|\|_{m,k}\text{ }:=\left(\sum_{n=0}^{N_{T}}\|v^{n}\|^{m}_{k} \Delta t\right)^{1/m}.
\end{gather*}
\noindent To analyze the rate of convergence of the approximation we assume that the
following regularity assumptions on the NSE
\begin{gather*}
u_{j} \in L^{\infty}\left(0,T;H^{1}(\Omega)\right)\cap H^{3}\left(0,T;H^{k+1}(\Omega)\right)\cap
H^{3}\left(0,T;H^{1}(\Omega)\right),\\
p_{j} \in L^{2}\left(0,T;H^{s+1}(\Omega)\right), \text{and }f_j \in L^{2}%
\left(0,T;L^{2}(\Omega)\right).
\end{gather*}
Assume $X_{h}$ and $Q_{h}$ satisfy the usual ($LBB^{h}$) condition, then the
method is equivalent to: \textit{for} $n=1,...,N_{T}-1$, \textit{find}
$u_{j,h}^{n+1} \in V_{h}$ \textit{such that}
\begin{gather}
\left(\frac{10u_{j,h}^{n+1}-15u_{j,h}^{n}+6u_{j,h}^{n-1}-u_{j,h}^{n-2}}{6\Delta t},v_{h}\right)+b^{\ast}\left(\left\langle u_{h}
\right\rangle^{n},u_{j,h}^{n+1},v_{h}\right)\label{EnEVBDF2AB2-h-1}\\
+b^{\ast}\left(u_{j,h}^{\prime n},3u_{j,h}^{n}-3u_{j,h}^{n-1}+u_{j,h}^{n-2}
,v_{h}\right)
+\nu \left(\nabla u_{j,h}^{n+1},\nabla
v_{h}\right)
=\left(f_{j}^{n+1},v_{h}\right)\text{ , }
\forall v_{h}\in V_{h}\nonumber.
\end{gather}

Let $e_{j}^{n}=u_{j}^{n}-u_{j,h}^{n}$ be the error between the true solution
and the approximate solution, then we have the following error estimates.

\begin{theorem}
[Convergence of (\text{En-BlendedBDF})]\label{th:errEnEVBDF2AB2} Consider the
method (\text{En-BlendedBDF}). If the following conditions hold
\begin{align}
C_e \frac{\Delta t}{\nu h}
\Vert \nabla u_{j,h}^{\prime n}\Vert^2\leq 1,  \qquad j= 1, ..., J,\label{err:CFL}
\end{align}
where $C_e$ is a constant that depends only on the domain and the minimum angle of the mesh and is independent of the timestep, then there is a positive constant $C$
independent of the mesh width and timestep such that
\begin{gather}
\frac{1}{2}\Vert e_{j}^{N}\Vert^{2}+\frac{1}{2}\|3e_{j}
^{N}-e_{j}^{N-1}\|^{2}+\frac{1}{2}\|3e_{j}
^{N}-3e_{j}^{N-1}+e_{j}^{N-2}\|^{2}\label{ineq:errlast2}\\
+\frac{1}{4}\sum_{n=2}^{N-1}\|e_{j}^{n+1}-3e_{j}^{n}
+3e_{j}^{n-1}-e_{j}^{n-2}\Vert^{2}
+\frac{3\nu\Delta t}{16}\Vert\nabla e_{j}^{N}\Vert^{2}\nonumber\\
+\nu\sum_{n=2}^{N-1}\Delta t
\Vert\nabla e_j^{n+1}\Vert^{2}+\frac{3\nu\Delta t}{8}
\left(\Vert\nabla e_{j}^{N}\Vert^{2}
+\Vert\nabla e_{j}^{N-1}\Vert^{2}\right)
+\frac{3\nu\Delta t}{16}\Vert\nabla e_{j}^{N-2}\Vert^{2}\nonumber\\
\leq exp\left(\frac{C T}{\nu^2}\right)\Bigg \lbrace\frac{1}{2}\Vert e_{j}^{2}\Vert^{2}+\frac{1}{2}\|3e_{j}
^{2}-e_{j}^{1}\|^{2}+\frac{1}{2}\|3e_{j}
^{2}-3e_{j}^{1}+e_{j}^{0}\|^{2}\nonumber\\
+\frac{3\nu\Delta t}{16}\Vert\nabla e_{j}^{2}\Vert^{2}
+\frac{3\nu\Delta t}{8}
\left(\Vert\nabla e_{j}^{2}\Vert^{2}
+\Vert\nabla e_{j}^{1}\Vert^{2}\right)
+\frac{3\nu\Delta t}{16}\Vert\nabla e_{j}^{0}\Vert^{2}\nonumber\\
+C\frac{h^{2k}}{\nu}\| |\nabla u_j|
\|^2_{\infty, 0}\| |u_j|\|^2_{2, k+1}+C\frac{\Delta t^6}{\nu}\| |\nabla u_{j, ttt}|\|^2_{2,0}
+C \frac{h^{2k}}{\nu}\||\nabla
u_{j}|\|^{2}_{2,k+1} \nonumber\\
+C\Delta t^4 h^{2k+1}\Vert \vert \nabla u_{j, ttt}\vert\Vert^2_{2,k}+C\Delta t^5 h  \| |\nabla u_{j, ttt}|\|^2_{2,0}
+C\frac{h^{2s+2}}{\nu}\||p_j|\|^{2}_{2,s+1} \nonumber\\
+Ch^{2k+2}\nu^{-1}\| |u_{j,t}|\|^{2}_{2,k+1}
+C\nu h^{2k}\| |\nabla u_{j} | \|_{2,k}^{2}+\frac{C{\Delta t}^{4}}{\nu}\| |
u_{j,ttt}| \| ^{2}_{2,0}\Bigg\rbrace\text{ .}\nonumber
\end{gather}

\end{theorem}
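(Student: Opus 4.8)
The plan is to follow the usual energy-argument template for BDF-type schemes, but now applied to the error equation rather than the scheme itself. First I would write the true solution $u_j$ as a solution of the variational form \eqref{EnEVBDF2AB2-h-1} plus a consistency residual, obtained by inserting $u_j^{n+1}$ into the scheme: this produces the truncation terms coming from $D_{1/2}(u_t^{n+1})-u_t^{n+1}$, from the extrapolation error $u^{n+1}-3u^n+3u^{n-1}+\cdots$ appearing in both the fluctuation term and the mean, and from $f_j^{n+1}$ being reproduced exactly. Subtracting \eqref{EnEVBDF2AB2-h-1} from this gives an equation for $e_j^{n+1}=u_j^{n+1}-u_{j,h}^{n+1}$. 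Then I split $e_j^n=(u_j^n-\tilde u_j^n)-(u_{j,h}^n-\tilde u_j^n)=:\eta_j^n-\phi_j^n$, where $\tilde u_j^n$ is the $L^2$- or Stokes-projection of $u_j^n$ into $V_h$, so that $\phi_j^n\in V_h$ and the standard interpolation estimates (plus the ($LBB^h$) condition to kill the pressure term, up to $h^{s+1}\|p\|_{s+1}$) apply to $\eta_j^n$.

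Next I set $v_h=\phi_j^{n+1}$ in the error equation and reuse exactly the algebraic identity from the stability proof of Theorem~\ref{th:1}: the discrete time-derivative term $(10\phi^{n+1}-15\phi^n+6\phi^{n-1}-\phi^{n-2},\phi^{n+1})/(6\Delta t)$ telescopes into the $\tfrac{1}{12}$-combinations of $\|\phi^{n+1}\|^2$, $\|3\phi^{n+1}-\phi^n\|^2$, $\|3\phi^{n+1}-3\phi^n+\phi^{n-1}\|^2$ and the positive numerical-dissipation term $\tfrac{1}{12}\|\phi^{n+1}-3\phi^n+3\phi^{n-1}-\phi^{n-2}\|^2$, while the viscous term yields $\nu\Delta t\|\nabla\phi^{n+1}\|^2$. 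The $b^*$ terms have to be organized carefully: the mean-advection term $b^*(\langle u_h\rangle^n,\phi^{n+1},\phi^{n+1})$ vanishes by skew-symmetry, and everything else — the $\eta$-contributions, the consistency residuals, and the nonlinear discrepancies between $b^*(\langle u\rangle^n,\cdot,\cdot)$ and $b^*(\langle u_h\rangle^n,\cdot,\cdot)$, between the true and discrete fluctuation terms — is moved to the right-hand side and bounded using \eqref{bd:0tri}--\eqref{bd:2tri}, the inverse inequality \eqref{bd:inv}, Lemma~\ref{lm}, and Young's inequality, with the small constants chosen so that a multiple of $\nu\Delta t\|\nabla\phi^{n+1}\|^2$ and the $\tfrac{1}{24}$-dissipation term are absorbed on the left. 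The one genuinely delicate nonlinear term is the discrete-fluctuation advection $b^*(\phi_j'^n,3u_j^n-3u_j^{n-1}+u_j^{n-2},\phi^{n+1})$ after rewriting (as in the stability proof) as $b^*(\phi_j'^n,\phi^{n+1},\phi^{n+1}-3\phi^n+3\phi^{n-1}-\phi^{n-2})$; this is exactly where the CFL condition \eqref{err:CFL} is used, producing the factor $(1-C_e\Delta t\,\|\nabla u_{j,h}'^n\|^2/(\nu h))\ge 0$ that guarantees nonnegativity, while the remaining piece $b^*(\eta_j'^n,\cdots)$ and the mixed $u'_j$-versus-$u_{j,h}'^n$ terms contribute consistency-type errors bounded by interpolation and Lemma~\ref{lm}.

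After these bounds, the inequality has the schematic form $a_{n+1}-a_n + c\,\nu\Delta t\|\nabla\phi^{n+1}\|^2 \le \dfrac{C}{\nu}\|\nabla\phi^{n+1}\|^2\cdot(\text{something small})\,\Delta t + (\text{data/consistency terms})\,\Delta t$, where $a_n$ is the energy-like combination of $\phi$-norms plus the $\nu\Delta t$-multiples of $\|\nabla\phi^n\|$, $\|\nabla\phi^{n-1}\|$, $\|\nabla\phi^{n-2}\|$ that appear on both sides of \eqref{ineq:errlast2}. I would then sum from $n=2$ to $N-1$, collect the consistency contributions using Lemma~\ref{lm} (the $\Delta t^6$, $\Delta t^4 h^{2k+1}$, $\Delta t^5 h$ terms all come from \eqref{cons1}--\eqref{cons2} combined with interpolation of $\nabla u_{j,ttt}$), the interpolation contributions from $\eta_j^n$ and $\eta_{j,t}$ and $p_j$, and apply the discrete Grönwall lemma — which is legitimate because the factor multiplying $a_n$ on the right is $O(\Delta t)$ and independent of $\Delta t$, yielding the $\exp(CT/\nu^2)$ prefactor. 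Finally I convert back to $e_j^n$ via $\|e_j^n\|\le\|\eta_j^n\|+\|\phi_j^n\|$ (and similarly for the gradient and the telescoped combinations), absorbing the $\eta$-norms into the interpolation error terms already present, which gives \eqref{ineq:errlast2}. The main obstacle I anticipate is the bookkeeping of the nonlinear error terms: there are three separate advective pieces (mean, discrete fluctuation, and their cross-differences with the true-solution analogues), each split into an $\eta$-part and a $\phi$-part, and one must choose all the Young's-inequality constants consistently so that the only $\|\nabla\phi^{n+1}\|^2$ terms surviving on the right are either absorbable into $\nu\Delta t$-terms on the left or multiplied by an $O(\Delta t)$ factor suitable for Grönwall — getting the CFL constant $C_e$ to depend only on the domain and mesh angle, as claimed, requires tracking exactly which trilinear bound is invoked where.
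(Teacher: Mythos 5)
Your proposal follows essentially the same route as the paper: split $e_j^n$ into a projection error in $V_h$ plus a discrete error, test with the discrete error, reuse the G-stability identity for the blended BDF term, eliminate the pressure via the discretely divergence-free test function, bound the advective pieces with \eqref{bd:0tri}--\eqref{bd:2tri}, the inverse inequality, Lemma~\ref{lm} and Young, absorb the critical fluctuation term (rewritten by skew-symmetry exactly as in the stability proof) using the CFL condition \eqref{err:CFL}, and conclude by summation, discrete Gr\"onwall and the triangle inequality. The only blemish is a notational slip in the delicate term, where the advecting field should be the computed fluctuation $u_{j,h}^{\prime n}$ (as your own CFL factor correctly indicates) rather than a fluctuation of the discrete error.
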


\begin{corollary}\label{cor1}
Under the assumptions of Theorem \ref{th:errEnEVBDF2AB2}, with $(X_{h},Q_{h}$) given by the P2-P1 Taylor-Hood approximation elements ($k=2, s=1$), i.e., $C^{0}$ piecewise quadratic velocity
space $X_{h}$ and $C^{0}$ piecewise linear pressure space $Q_{h}$,  we have the following error estimate
\begin{gather}
\frac{1}{2}\Vert e_{j}^{N}\Vert^{2}+\frac{1}{2}\|3e_{j}
^{N}-e_{j}^{N-1}\|^{2}+\frac{1}{2}\|3e_{j}
^{N}-3e_{j}^{N-1}+e_{j}^{N-2}\|^{2}+\frac{9\nu\Delta t}{16}\Vert\nabla e_{j}^{N}\Vert^{2}\label{ineq:errlast3}\\
+\frac{1}{4}\sum_{n=2}^{N-1}\|e_{j}^{n+1}-3e_{j}^{n}
+3e_{j}^{n-1}-e_{j}^{n-2}\Vert^{2}
+\frac{3\nu\Delta t}{8}\Vert\nabla e_{j}^{N-1}\Vert^{2}
+\frac{3\nu\Delta t}{16}\Vert\nabla e_{j}^{N-2}\Vert^{2}\nonumber\\
\leq C \Big(h^{4}+\Delta t^4+\frac{1}{2}\Vert e_{j}^{2}\Vert^{2}+\frac{1}{2}\|3e_{j}
^{2}-e_{j}^{1}\|^{2}+\frac{1}{2}\|3e_{j}
^{2}-3e_{j}^{1}+e_{j}^{0}\|^{2}\nonumber\\
+\frac{9\nu\Delta t}{16}\Vert\nabla e_{j}^{2}\Vert^{2}
+\frac{3\nu\Delta t}{8}\Vert\nabla e_{j}^{1}\Vert^{2}
+\frac{3\nu\Delta t}{16}\Vert\nabla e_{j}^{0}\Vert^{2}\Big)\text{ .}\nonumber
\end{gather}
\end{corollary}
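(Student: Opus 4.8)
The plan is to obtain the stated estimate directly from Theorem~\ref{th:errEnEVBDF2AB2} by specializing the finite element pair and absorbing all problem data into a single constant, so no genuinely new work is needed. First I would note that for the P2-P1 Taylor--Hood pair one has $k=2$ and $s=1$, substitute these values into the right-hand side of \eqref{ineq:errlast2}, and sort the resulting consistency/interpolation terms by their orders in $h$ and $\Delta t$. The spatially-rated terms carrying $h^{2k}=h^{4}$ (the three terms $C\frac{h^{2k}}{\nu}\||\nabla u_j|\|^2_{\infty,0}\||u_j|\|^2_{2,k+1}$, $C\frac{h^{2k}}{\nu}\||\nabla u_j|\|^2_{2,k+1}$ and $C\nu h^{2k}\||\nabla u_j|\|^2_{2,k}$) and the pressure term carrying $h^{2s+2}=h^{4}$ contribute $O(h^{4})$, while the term with $h^{2k+2}=h^{6}$ is of higher order and, using $h\le 1$, is dominated by $Ch^{4}$. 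Among the temporal terms, $\frac{C\Delta t^{4}}{\nu}\||u_{j,ttt}|\|^2_{2,0}$ contributes $O(\Delta t^{4})$, whereas the remaining ones carry $\Delta t^{6}$, $\Delta t^{4}h^{2k+1}=\Delta t^{4}h^{5}$ and $\Delta t^{5}h$ respectively, all of higher order and, using $\Delta t\le 1$ and $h\le 1$ (or a single application of Young's inequality), bounded by $C\Delta t^{4}$.

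Next I would use the regularity hypotheses on $u_j$, $p_j$, $f_j$ stated just before the theorem to replace each of the norms $\||\nabla u_j|\|_{\infty,0}$, $\||u_j|\|_{2,k+1}$, $\||\nabla u_j|\|_{2,k}$, $\||u_{j,t}|\|_{2,k+1}$, $\||u_{j,ttt}|\|_{2,0}$, $\||\nabla u_{j,ttt}|\|_{2,0}$, $\||\nabla u_{j,ttt}|\|_{2,k}$ and $\||p_j|\|_{2,s+1}$ by finite constants; together with the factor $\exp(CT/\nu^{2})$ and the various negative powers of $\nu$, these are all collected into one constant $C$, independent of $h$ and $\Delta t$, which yields the $C(h^{4}+\Delta t^{4})$ appearing on the right of \eqref{ineq:errlast3}. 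The startup quantities $\tfrac12\Vert e_j^2\Vert^2$, $\tfrac12\Vert 3e_j^2-e_j^1\Vert^2$, $\tfrac12\Vert 3e_j^2-3e_j^1+e_j^0\Vert^2$ and $\tfrac{3\nu\Delta t}{16}\Vert\nabla e_j^2\Vert^2+\tfrac{3\nu\Delta t}{8}(\Vert\nabla e_j^2\Vert^2+\Vert\nabla e_j^1\Vert^2)+\tfrac{3\nu\Delta t}{16}\Vert\nabla e_j^0\Vert^2$ are kept unchanged, after combining the two $\Vert\nabla e_j^2\Vert^2$ contributions into $\tfrac{9\nu\Delta t}{16}\Vert\nabla e_j^2\Vert^2$; note that the timestep hypothesis \eqref{err:CFL} is exactly that of Theorem~\ref{th:errEnEVBDF2AB2}, so nothing further is assumed.

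Finally, on the left-hand side of \eqref{ineq:errlast2} I would combine the two occurrences of $\Vert\nabla e_j^N\Vert^2$ via $\tfrac{3\nu\Delta t}{16}+\tfrac{3\nu\Delta t}{8}=\tfrac{9\nu\Delta t}{16}$ and simply discard the nonnegative term $\nu\sum_{n=2}^{N-1}\Delta t\Vert\nabla e_j^{n+1}\Vert^2$ (it may be retained, but is not needed for the stated conclusion), which produces exactly the left-hand side of \eqref{ineq:errlast3}. Since every step is an elementary substitution, bookkeeping of exponents, bounding of data-dependent norms by a constant, or discarding of a nonnegative quantity, there is no real obstacle; the only points requiring care are verifying that each higher-order term ($h^{6}$, $\Delta t^{6}$, $\Delta t^{4}h^{5}$, $\Delta t^{5}h$) is indeed absorbed into $C(h^{4}+\Delta t^{4})$ for $h,\Delta t\le 1$, and that the coefficient arithmetic on the left-hand side is carried out correctly.
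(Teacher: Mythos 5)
Your proposal is correct and follows essentially the same route the paper intends: the corollary is obtained from Theorem \ref{th:errEnEVBDF2AB2} by setting $k=2$, $s=1$, bounding the regularity norms by constants, absorbing the higher-order terms ($h^{6}$, $\Delta t^{6}$, $\Delta t^{4}h^{5}$, $\Delta t^{5}h$) into $C(h^{4}+\Delta t^{4})$ for $h,\Delta t\lesssim 1$, and combining $\tfrac{3\nu\Delta t}{16}+\tfrac{3\nu\Delta t}{8}=\tfrac{9\nu\Delta t}{16}$ while discarding the nonnegative dissipation sum. The paper offers no separate argument for the corollary, treating it as exactly this substitution and bookkeeping, so your write-up matches the intended proof.
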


{\allowdisplaybreaks
\begin{proof}

The true solution$(u_{j}, p_j)$ of the NSE  satisfies

\begin{gather}
\left(\frac{10u_{j}^{n+1}-15u_{j}^{n}+6u_j^{n-1}-u_{j,h}^{n-2}}{6 \Delta t}, v_{h}\right)+ b^{*}\left(u_{j}^{n+1},
u_{j}^{n+1}, v_{h}\right)\label{eq:convtrue}\\
+ \nu\left(\nabla u_{j}^{n+1}, \nabla v_{h}\right)- \left(p_{j}
^{n+1},\nabla\cdot v_{h}\right)
=\left(f_{j}^{n+1}, v_{h}\right)+ Intp\left(u_{j}^{n+1};v_{h}\right)\text{, } \text{ for all } v_{h}\in V_{h}\text{,}\nonumber
\end{gather}

\noindent where $Intp\left(u_{j}^{n+1};v_{h}\right)$ is defined as

\[
Intp\left(u_{j}^{n+1};v_{h}\right)=
\left(\frac{10u_{j}^{n+1}-15u_{j}^{n}+6u_j^{n-1}-u_{j,h}^{n-2}}{6\Delta t}-u_{j,t}(t^{n+1}),v_{h}\right)\text{ .}
\]

\noindent Let $e_{j}^{n}=u_{j}^{n}-u_{j,h}^{n}=\left(u_{j}^{n}-I_{h} u_{j}^{n}\right)+\left(I_{h} u_{j}%
^{n}-u_{j,h}^{n}\right)=\eta_{j}^{n}+\xi_{j,h}^{n}$, where $I_{h} u_{j}^{n} \in V_{h} $ is an interpolant of $u_{j}^{n}$ in
$V_{h}.$ Subtracting (\ref{EnEVBDF2AB2-h-1}) from (\ref{eq:convtrue}) gives%

\begin{gather}
\left(\frac{10\xi_{j,h}^{n+1}-15\xi_{j,h}^{n}+6\xi_{j,h}^{n-1}-\xi_{j,h}^{n-2}}{ 6\Delta t},v_{h}\right) +b^{*}\left(u_{j}^{n+1},u_{j}^{n+1},v_{h}\right)\label{eq:err}\\
+\nu\left(\nabla\xi
_{j,h}^{n+1},\nabla v_{h}\right)
-b^{*}\left(3u_{j,h}^{n}-3u_{j,h}^{n-1}+u_{j,h}^{n-2}-u_{j,h}^{\prime n},u_{j,h}^{n+1},v_{h}\right) \nonumber\\
-b^{*}\left(u_{j,h}^{\prime n},3u_{j,h}
^{n}-3u_{j,h}^{n-1}+u_{j,h}^{n-2},v_{h}\right)
-\left(p_{j}^{n+1},\nabla\cdot v_{h}\right)\nonumber\\
=-\left(\frac{10\eta_{j}^{n+1}-15\eta_{j}^{n}+6\eta_j^{n-1}-\eta_j{^{n-2}}}{6\Delta t},v_{h}\right) -\nu\left(\nabla\eta
_{j}^{n+1},\nabla v_{h}\right)
 +Intp\left(u_j^{n+1};v_{h}\right)\text{ .}\nonumber
\end{gather}

\noindent Set $v_{h}=\xi_{j,h}^{n+1}\in V_{h}$ , and rearrange the nonlinear terms, then
we have
%and sum up (\ref{eq:err} ) from n=0 to n=N gives%
\begin{gather}
\frac{1}{12\Delta t}\left(\Vert \xi_{j,h}^{n+1}\Vert^{2}-\Vert \xi_{j,h}^{n}\Vert^{2}\right)+\frac{1}{12\Delta t}\left(\Vert 3\xi_{j,h}^{n+1}-\xi_{j,h}^{n}\Vert^2-\Vert 3\xi
_{j,h}^{n}-\xi_{j,h}^{n-1}\Vert^{2}\right)\label{eq:err1}\\
\frac{1}{12\Delta t}\left(\Vert 3\xi_{j,h}^{n+1}-3\xi_{j,h}^n+\xi_{j,h}^{n-1}\Vert^2
-\Vert 3\xi_{j,h}^n-3\xi_{j,h}^{n-1}+\xi_{j,h}^{n-2}\Vert^2\right) \nonumber\\
+\frac{1}{12\Delta t}\|\xi_{j,h}^{n+1}-3\xi_{j,h}^{n}+3\xi_{j,h}^{n-1}-\xi_{j,h}^{n-2}\|^{2} +\nu \Vert \nabla \xi_{j,h}^{n+1}\Vert^2\nonumber\\
=-b^{*}\left(u_{j}^{n+1},u_{j}^{n+1},\xi_{j,h}^{n+1}\right)
+b^{*}\left(3u_{j,h}^{n}-3u_{j,h}^{n-1}+u_{j,h}^{n-2},u_{j,h}^{n+1},\xi_{j,h}^{n+1}\right)\nonumber\\
 +b^{*}\left(u_{j,h}^{\prime n},3u_{j,h}
^{n}-3u_{j,h}^{n-1}+u_{j,h}^{n-2}-u_{j,h}^{n+1},\xi_{j,h}^{n+1}\right)+\left(p_{j}^{n+1},\nabla\cdot\xi_{j,h}^{n+1}\right)\nonumber\\
-\left(\frac{10\eta_{j}^{n+1}-15\eta_{j}^{n}
+6\eta_j^{n-1}-u_{j,h}^{n-2}}{ 6\Delta t},\xi_{j,h}^{n+1}\right) -\nu
\left(\nabla\eta_{j}^{n+1},\nabla\xi_{j,h}^{n+1}\right)
 +Intp\left(u_j^{n+1};\xi_{j,h}
^{n+1}\right)\text{ .}\nonumber
\end{gather}

\noindent We first bound the nonlinear terms on the right hand side of equation \eqref{eq:err1}. Adding and subtracting $b^{*}(u_{j}^{n+1},u_{j,h}^{n+1}%
,\xi_{j,h}^{n+1})$, $b^{\ast}(3u^{n}_j-3u^{n-1}_j+u_{j}^{n-2}, u_{j,h}^{n+1}, \xi_{j,h}^{n+1})$ and $b^{*}(u_{j,h}^{\prime n},3u_{j}^{n}-3u_{j}^{n-1}+u_{j}^{n-2}-u_j^{n+1}
,\xi_{j,h}^{n+1})$ respectively, we rewrite the nonlinear terms as
\begin{gather}
-b^{*}\left(u_{j}^{n+1},u_{j}^{n+1},\xi_{j,h}^{n+1}\right)+b^{*}\left(3u_{j,h}^{n}
-3u_{j,h}^{n-1}+u_{j,h}^{n-2}
,u_{j,h}^{n+1},\xi_{j,h}^{n+1}\right)\label{eq:nonlinear}\\
+b^{*}\left(u_{j,h}^{\prime n},3u_{j,h}^{n}-3u_{j,h}^{n-1}+u_{j,h}^{n-2}-u^{n+1}_{j,h},\xi_{j,h}^{n+1}%
\right)\nonumber\\
=-b^{*}\left(u_{j}^{n+1},e_{j}^{n+1},\xi_{j,h}^{n+1}\right)-b^{*}\left(u_{j}^{n+1},u_{j,h}^{n+1}%
,\xi_{j,h}^{n+1}\right)\nonumber\\
+b^{*}\left(3u_{j,h}^{n}-3u_{j,h}^{n-1}+u_{j,h}^{n-2}
,u_{j,h}^{n+1},\xi_{j,h}^{n+1}\right)
+b^{*}\left(u_{j,h}^{\prime n},3u_{j,h}^{n}-3u_{j,h}^{n-1}+u_{j,h}^{n-2}-u^{n+1}_{j,h},\xi_{j,h}^{n+1}%
\right)\nonumber\\
=-b^{*}\left(u_{j}^{n+1},e_{j}^{n+1},\xi_{j,h}^{n+1}\right)-b^{*}\left(u_{j}^{n+1}-(3u_j^n-3u_j^{n-1}+u_{j,h}^{n-2}),u_{j,h}^{n+1}%
,\xi_{j,h}^{n+1}\right)\nonumber\\
-b^{*}\left(3e^{n}_j-3e^{n-1}_j+e_{j}^{n-2}
,u_{j,h}^{n+1},\xi_{j,h}^{n+1}\right)
+b^{*}\left(u_{j,h}^{\prime n},3u_{j,h}^{n}-3u_{j,h}^{n-1}+u_{j,h}^{n-2}-u^{n+1}_{j,h},\xi_{j,h}^{n+1}%
\right)\nonumber\\
=-b^{*}\left(u_{j}^{n+1},e_{j}^{n+1},\xi_{j,h}^{n+1}\right)-b^{*}\left(u_{j}^{n+1}-\left(3u_j^n-3u_j^{n-1}+u_j^{n-2}\right),u_{j,h}^{n+1}%
,\xi_{j,h}^{n+1}\right)\nonumber\\
-b^{*}\left(3e^{n}_j-3e^{n-1}_j+e^{n-2}_j
,u_{j,h}^{n+1},\xi_{j,h}^{n+1}\right)
-b^{*}\left(u_{j,h}^{\prime n},3e_j^{n}-3e_j^{n-1}+e_j^{n-2}-e_j^{n+1},\xi_{j,h}^{n+1}%
\right)\nonumber\\
+b^{*}\left(u_{j,h}^{\prime n},3u_{j}^{n}-3u_{j}^{n-1}+u_j^{n-2}-u^{n+1}_{j},\xi_{j,h}^{n+1}%
\right)\nonumber\\
=-b^{*}\left(u_{j}^{n+1},\eta_{j}^{n+1},\xi_{j,h}^{n+1}\right)-b^{*}\left(u_{j}^{n+1}
-(3u_{j}^{n}-3u_j^{n-1}+u_j^{n-2}),u_{j,h}^{n+1},\xi_{j,h}^{n+1}\right)\nonumber\\
-b^*\left(3\eta^n_j-3\eta^{n-1}_j+\eta^{n-2}_j, u_{j,h}^{n+1}, \xi_{j,h}^{n+1}\right)-b^*\left(3\xi^n_{j,h}-3\xi^{n-1}_{j,h}+\xi^{n-2}_{j,h}, u_{j,h}^{n+1}, \xi_{j,h}^{n+1}\right)\nonumber\\
-b^{*}\left(u_{j,h}^{\prime n},3\xi_{j,h}^{n}-3\xi_{j,h}^{n-1}+\xi_{j,h}^{n-2}-\xi_{j,h}^{n+1},\xi_{j,h}^{n+1}%
\right)-b^{*}\left(u_{j,h}^{\prime n},3\eta_j^{n}-3\eta_j^{n-1}+\eta_j^{n-2}-\eta_j^{n+1},\xi_{j,h}^{n+1}%
\right)\nonumber\\
+b^{*}\left(u_{j,h}^{\prime n},3u_{j}^{n}-3u_{j}^{n-1}+u_j^{n-2}-u^{n+1}_{j},\xi_{j,h}^{n+1}%
\right)\text{ .}\nonumber
\end{gather}

\noindent We estimate the nonlinear terms using \eqref{bd:0tri}, \eqref{bd:tri}, Lemma \ref{lm} and Young's inequality as follows.

\begin{gather}
b^{*}\left(u_{j}^{n+1},\eta_{j}^{n+1},\xi_{j,h}^{n+1}\right) \leq C\|\nabla u_{j}%
^{n+1}\|\|\nabla\eta_{j}^{n+1}\|\|\nabla\xi_{j,h}^{n+1}\|\\
\leq\frac{\nu}{64}\|\nabla\xi_{j,h}^{n+1}\|^{2}+C\nu^{-1}\|\nabla u_{j}%
^{n+1}\|^{2}\|\nabla\eta_{j}^{n+1}\|^{2}\text{ .}\nonumber
\end{gather}

\begin{gather}
b^{*}\left(u_{j}^{n+1}-\left(3u_{j}^{n}-3u_j^{n-1}+u_j^{n-2}\right),u_{j,h}^{n+1},\xi_{j,h}^{n+1}\right) \\
\leq C\|\nabla
\left(u_{j}^{n+1}-3u_{j}^{n}+3u_j^{n-1}-u_j^{n-2}\right)\|\|\nabla u_{j,h}^{n+1}\|\|\nabla\xi_{j,h}%
^{n+1}\|\nonumber\\
\leq\frac{\nu}{64}\|\nabla\xi_{j,h}^{n+1}\|^{2}+C\nu^{-1}\|\nabla \left(u_{j}
^{n+1}-3u_{j}^{n}+3u_j^{n-1}-u_j^{n-2}\right)\|^{2}\|\nabla u_{j,h}^{n+1}\|^{2}\nonumber\\
\leq\frac{\nu}{64}\|\nabla\xi_{j,h}^{n+1}\|^{2}+C\nu^{-1}\Delta t^5
\left(\int_{t^{n-2}}^{t^{n+1}}\| \nabla u_{j,ttt} \|^{2} dt\right)\|\nabla u_{j,h}%
^{n+1}\|^{2}\text{ . }\nonumber
\end{gather}
\begin{gather}
b^*\left(3\eta^n_j-3\eta^{n-1}_j+\eta^{n-2}_j, u_{j,h}^{n+1}, \xi_{j,h}^{n+1}\right) \\
\leq C\|\nabla
\left(3\eta^n_j-3\eta^{n-1}_j+\eta^{n-2}_j\right) \|\|\nabla u_{j,h}^{n+1}\|\|\nabla\xi_{j,h}%
^{n+1}\|\nonumber\\
\leq\frac{\nu}{64}\|\nabla\xi_{j,h}^{n+1}\|^{2}+C\nu^{-1}\left(\|\nabla\eta^n_j \|^{2}+\Vert \nabla\eta^{n-1}_j \|^{2}+\Vert \nabla\eta^{n-2}_j \|^{2}\right)\|\nabla u_{j,h}^{n+1}\|^{2} .\nonumber
\end{gather}

\begin{gather}
3b^*\left(\xi^n_{j,h}, u_{j,h}^{n+1}, \xi_{j,h}^{n+1}\right) \leq C\|
\nabla \xi^n_{j,h} \|^{\frac{1}{2}} \Vert \xi^n_{j,h}\Vert^{\frac{1}{2}}\|\nabla u_{j,h}^{n+1}\|\|\nabla\xi_{j,h}%
^{n+1}\|\\
\leq C\|
\nabla \xi^n_{j,h} \|^{\frac{1}{2}} \Vert \xi^n_{j,h}\Vert^{\frac{1}{2}}\|\nabla\xi_{j,h}%
^{n+1}\|
\leq C\left(\epsilon \|\nabla \xi_{j,h}^{n+1}\|^{2}+\frac{1}{\epsilon}\|\nabla \xi^n_{j,h} \|\|\xi^n_{j,h} \|\right) \nonumber\\
\leq C\left(\epsilon \|\nabla \xi_{j,h}^{n+1}\|^{2}+\frac{1}{\epsilon}\left(\delta \|\nabla \xi^n_{j,h} \|^2+\frac{1}{\delta}\|\xi^n_{j,h} \|^2\right)\right) \nonumber\\
\leq\left(\frac{\nu}{64} \|\nabla \xi_{j,h}^{n+1}\|^{2}+\frac{\nu}{32} \|\nabla \xi^n_{j,h} \|^2\right)+C\nu^{-3}\|\xi^n_{j,h} \|^2 .\nonumber
\end{gather}
Similarly,
\begin{gather}
3b^*\left(\xi^{n-1}_{j,h}, u_{j,h}^{n+1}, \xi_{j,h}^{n+1}\right) \leq C\|
\nabla \xi^{n-1}_{j,h} \|^{\frac{1}{2}} \Vert \xi^{n-1}_{j,h}\Vert^{\frac{1}{2}}\|\nabla u_{j,h}^{n+1}\|\|\nabla\xi_{j,h}%
^{n+1}\|\\
\leq C\|
\nabla \xi^{n-1}_{j,h} \|^{\frac{1}{2}} \Vert \xi^{n-1}_{j,h}\Vert^{\frac{1}{2}}\|\nabla\xi_{j,h}%
^{n+1}\|
\leq C\left(\epsilon \|\nabla \xi_{j,h}^{n+1}\|^{2}+\frac{1}{\epsilon}\|\nabla \xi^{n-1}_{j,h} \|\|\xi^{n-1}_{j,h} \|\right) \nonumber\\
\leq C\left(\epsilon \|\nabla \xi_{j,h}^{n+1}\|^{2}+\frac{1}{\epsilon}\left(\delta \|\nabla \xi^{n-1}_{j,h} \|^2+\frac{1}{\delta}\|\xi^{n-1}_{j,h} \|^2\right)\right) \nonumber\\
\leq \left(\frac{\nu}{64} \|\nabla \xi_{j,h}^{n+1}\|^{2}+\frac{\nu}{32} \|\nabla \xi^{n-1}_{j,h} \|^2\right)+C\nu^{-3}\|\xi^{n-1}_{j,h} \|^2 .\nonumber
\end{gather}
\begin{gather}
3b^*\left(\xi^{n-2}_{j,h}, u_{j,h}^{n+1}, \xi_{j,h}^{n+1}\right) \leq C\|
\nabla \xi^{n-2}_{j,h} \|^{\frac{1}{2}} \Vert \xi^{n-2}_{j,h}\Vert^{\frac{1}{2}}\|\nabla u_{j,h}^{n+1}\|\|\nabla\xi_{j,h}%
^{n+1}\|\\
\leq C\|
\nabla \xi^{n-2}_{j,h} \|^{\frac{1}{2}} \Vert \xi^{n-2}_{j,h}\Vert^{\frac{1}{2}}\|\nabla\xi_{j,h}%
^{n+1}\|
\leq C\left(\epsilon \|\nabla \xi_{j,h}^{n+1}\|^{2}+\frac{1}{\epsilon}\|\nabla \xi^{n-2}_{j,h} \|\|\xi^{n-2}_{j,h} \|\right) \nonumber\\
\leq C\left(\epsilon \|\nabla \xi_{j,h}^{n+1}\|^{2}+\frac{1}{\epsilon}\left(\delta \|\nabla \xi^{n-2}_{j,h} \|^2+\frac{1}{\delta}\|\xi^{n-2}_{j,h} \|^2\right)\right) \nonumber\\
\leq \left(\frac{\nu}{64} \|\nabla \xi_{j,h}^{n+1}\|^{2}+\frac{\nu}{32} \|\nabla \xi^{n-2}_{j,h} \|^2\right)+C\nu^{-3}\|\xi^{n-2}_{j,h} \|^2 .\nonumber
\end{gather}
By skew symmetry
\begin{gather}
b^{*}\left(u_{j,h}^{\prime n},3\xi_{j,h}^{n}-3\xi_{j,h}^{n-1}+\xi_{j,h}^{n-2}-\xi_{j,h}^{n+1},\xi_{j,h}^{n+1}%
\right)\nonumber\\
=-b^{*}\left(u_{j,h}^{\prime n},\xi_{j,h}^{n+1}-3\xi_{j,h}^{n}+3\xi_{j,h}^{n-1}-\xi_{j,h}^{n-2},\xi_{j,h}^{n+1}\right) \nonumber\\
=b^{*}\left(u_{j,h}^{\prime n},\xi_{j,h}^{n+1}, \xi_{j,h}^{n+1}-3\xi_{j,h}^{n}+3\xi_{j,h}^{n-1}
-\xi_{j,h}^{n-2}\right) .\nonumber
\end{gather}
Using \eqref{bd:2tri} and inverse inequality \eqref{bd:inv} gives
\begin{gather}
b^{*}\left(u_{j,h}^{\prime n},3\xi_{j,h}^{n}-3\xi_{j,h}^{n-1}+\xi_{j,h}^{n-2}-\xi_{j,h}^{n+1},\xi_{j,h}^{n+1}%
\right) \\
\leq C
 \|\nabla 
 u_{j,h}^{\prime n} \| \|\nabla \xi_{j,h}^{n+1}\|\| \nabla (\xi_{j,h}
^{n+1}-3\xi_{j,h}^{n}+3\xi_{j,h}^{n-1}-\xi_{j,h}^{n-2})\|^{\frac{1}{2}}\| \xi_{j,h}
^{n+1}-3\xi_{j,h}^{n}+3\xi_{j,h}^{n-1}-\xi_{j,h}^{n-2}\|^{\frac{1}{2}}\nonumber\\
 \leq C
 \|\nabla 
 u_{j,h}^{\prime n} \| \|\nabla \xi_{j,h}^{n+1}\|\left(h^{-\frac{1}{2}}\right)\| \xi_{j,h}
^{n+1}-3\xi_{j,h}^{n}+3\xi_{j,h}^{n-1}-\xi_{j,h}^{n-2}\|\nonumber\\
\leq\frac{1}{24\Delta t}\|\xi_{j,h}
^{n+1}-3\xi_{j,h}^{n}+3\xi_{j,h}^{n-1}-\xi_{j,h}^{n-2}\|^{2}+\frac{C_e}{32}\frac{\Delta t}{h}\|
 \nabla 
 u_{j,h}^{\prime n} \|^2\|\nabla \xi^{n+1}_{j,h} \|^{2}.\nonumber
\end{gather}

\begin{gather}
b^{*}\left(u_{j,h}^{\prime n},\eta_{j}^{n+1}-3\eta_{j}^{n}+3\eta_j^{n-1}-\eta_{j}^{n-2},\xi_{j,h}^{n+1}\right)\\
\leq C\|\nabla
u_{j,h}^{\prime n}\|\|\nabla\left(\eta_{j}^{n+1}-3\eta_{j}^{n}
+3\eta_j^{n-1}-\eta_j^{n-2}\right)\|\|\nabla\xi_{j,h}%
^{n+1}\|\nonumber\\
\leq\frac{\nu}{64}\|\nabla\xi_{j,h}^{n+1}\|^{2}+C\nu^{-1}\|\nabla u_{j,h}^{\prime n}\|^{2}\|\nabla\left(\eta_{j}^{n+1}-3\eta_{j}^{n}
+3\eta_j^{n-1}-\eta_j^{n-2}\right)\|^{2}\nonumber\\
\leq\frac{\nu}{64}\|\nabla\xi_{j,h}^{n+1}\|^{2}+\frac{C \Delta t^5}{\nu}\|\nabla
u_{j,h}^{\prime n}\|^{2}\left(\int_{t^{n-2}}^{t^{n+1}}\| \nabla \eta_{j,ttt}\|^{2} \text{ }dt\right)\text{
.}\nonumber
\end{gather}

\begin{gather}
b^{*}\left(u_{j,h}^{\prime n},u_{j}^{n+1}-3u_{j}^{n}+3u_j^{n-1}-u_j^{n-2},\xi_{j,h}^{n+1}\right)\\
\leq C\|\nabla
u_{j,h}^{\prime n}\|\|\nabla\left(u_{j}^{n+1}-3u_{j}^{n}+3u_j^{n-1}-u_j^{n-2}\right)\|\|\nabla\xi_{j,h}%
^{n+1}\|\nonumber\\
\leq\frac{\nu}{64}\|\nabla\xi_{j,h}^{n+1}\|^{2}+C\nu^{-1}\|\nabla u_{j,h}^{\prime n}\|^{2}\|\nabla\left(u_{j}^{n+1}-3u_{j}^{n}+3u_j^{n-1}-u_j^{n-2}\right)\|^{2}\nonumber\\
\leq\frac{\nu}{64}\|\nabla\xi_{j,h}^{n+1}\|^{2}+C \nu^{-1} \Delta t^5\|\nabla
u_{j,h}^{\prime n}\|^{2}\left(\int_{t^{n-2}}^{t^{n+1}}\| \nabla u_{j,ttt}\|^{2} \text{ }dt\right)\text{
.}\nonumber
\end{gather}

\noindent As $\xi_{j,h}^{n+1}\in V_{h}$ we have the following estimate for the pressure term

\begin{gather}
\left(p_{j}^{n+1},\nabla\cdot\xi_{j,h}^{n+1}\right)=\left(p_{j}^{n+1}-q_{j,h}^{n+1},
\nabla\cdot\xi_{j,h}^{n+1}\right)
\leq\|p_{j}^{n+1}-q_{j,h}^{n+1}\|\|\nabla\cdot\xi_{j,h}^{n+1}\|\nonumber\\
\leq\frac{\nu}{64}\|\nabla\xi_{j,h}^{n+1}\|^{2}+C \nu^{-1}\|p_{j}%
^{n+1}-q_{j,h}^{n+1}\|^{2} , \qquad\forall q_{j,h}^{n+1} \in Q_h \text{ .}\nonumber
\end{gather}

\noindent For the rest of the terms on the right hand side of \eqref{eq:err1} we have

\begin{gather}
\left(\frac{10\eta_{j}^{n+1}-15\eta_{j}^{n}+6\eta_j^{n-1}-
\eta_{j}^{n-2}}{ 6\Delta t},\xi_{j,h}^{n+1}\right)\\
\leq C\|\frac{10\eta_{j}^{n+1}-15\eta_{j}^{n}+6\eta_j^{n-1}-\eta_j^{n-2}}{ 6\Delta t}\| \|\nabla\xi
_{j,h}^{n+1}\|\nonumber\\
\leq C \nu^{-1}\|\frac{10\eta_{j}^{n+1}-15\eta_{j}^{n}+6\eta_j^{n-1}-\eta_j^{n-2}}{ 6\Delta t}\|^{2}+\frac{\nu}{64}\|\nabla\xi_{j,h}^{n+1}\|^{2}\nonumber\\
\leq C \nu^{-1}\|\frac{1}{\Delta t}\int_{t^{n-2}}^{t^{n+1}} \eta_{j,t} \text{ } dt
\|^2+\frac{\nu}{64}\|\nabla\xi_{j,h}^{n+1}\|^{2}
\leq\frac{C}{\nu\Delta t}\int_{t^{n-2}}^{t^{n+1}}\| \eta_{j,t}\|^{2}\text{ }
dt+\frac{\nu}{64}\|\nabla\xi_{j,h}^{n+1}\|^{2}\text{ ,}\nonumber
\end{gather}

\begin{gather}
\nu\left(\nabla\eta_{j}^{n+1},\nabla\xi_{j,h}^{n+1}\right) \leq\nu\|\nabla\eta_{j}%
^{n+1}\| \|\nabla\xi_{j,h}^{n+1}\|
\leq C\nu\|\nabla\eta_{j}^{n+1}\|^{2}+ \frac{\nu}{64}\|\nabla\xi_{j,h}%
^{n+1}\|^{2} \text{ ,}
\end{gather}

\noindent and 

\begin{gather}
Intp\left(u_{j}^{n+1};\xi_{j,h}^{n+1}\right)=\left(\frac{10u_{j}^{n+1}-15u_{j}^{n}
+6u_j^{n-1}-u_j^{n-2}}{6\Delta
t}-u_{j,t}(t^{n+1}),\xi_{j,h}^{n+1}\right)\nonumber\\
\leq C\|\frac{10u_{j}^{n+1}-15u_{j}^{n}+6u_j^{n-1}-u_j^{n-2}}{6\Delta t}-u_{j,t}(t^{n+1})\|
\|\nabla\xi_{j,h}^{n+1}\|\label{lastineq}\\
\leq\frac{\nu}{64}\|\nabla\xi_{j,h}^{n+1}\|^{2}+\frac{C}{\nu}\|\frac
{10u_{j}^{n+1}-15u_{j}^{n}+6u_j^{n-1}-u_j^{n-2}}{6\Delta t}-u_{j,t}(t^{n+1})\|^{2}\nonumber\\
\leq\frac{\nu}{64}\|\nabla\xi_{j,h}^{n+1}\|^{2}+\frac{C\Delta t^3}{\nu}%
\int_{t^{n-2}}^{t^{n+1}}\|u_{j,ttt}\|^{2} dt \text{ .}\nonumber
\end{gather}
\noindent Combining the above inequalities with \eqref{eq:err1} yields

\begin{gather}
\frac{1}{12\Delta t}\left(\Vert \xi_{j,h}^{n+1}\Vert^{2}-\Vert \xi_{j,h}^{n}\Vert^{2}\right)+\frac{1}{12\Delta t}\left(\Vert 3\xi_{j,h}^{n+1}-\xi_{j,h}^{n}\Vert^2-\Vert 3\xi
_{j,h}^{n}-\xi_{j,h}^{n-1}\Vert^{2}\right)\label{eq:err2}\\
\frac{1}{12\Delta t}\left(\Vert 3\xi_{j,h}^{n+1}-3\xi_{j,h}^n+\xi_{j,h}^{n-1}\Vert^2
-\Vert 3\xi_{j,h}^n-3\xi_{j,h}^{n-1}+\xi_{j,h}^{n-2}\Vert^2\right) \nonumber\\
+\frac{1}{24\Delta t}\|\xi_{j,h}^{n+1}-3\xi_{j,h}^{n}+3\xi_{j,h}^{n-1}-\xi_{j,h}^{n-2}\|^{2} +\frac{\nu}{32}
\left(\Vert\nabla\xi_{j,h}^{n+1}\Vert^{2}
-\Vert\nabla\xi_{j,h}^{n}\Vert^{2}\right)\nonumber\\
+\frac{\nu}{6}
\Vert\nabla\xi_{j,h}^{n+1}\Vert^{2}+\frac{\nu}{16}\left(
\left(\Vert\nabla\xi_{j,h}^{n+1}\Vert^{2}
+\Vert\nabla\xi_{j,h}^{n}\Vert^{2}\right)-
\left(\Vert\nabla\xi_{j,h}^{n}\Vert^{2}
+\Vert\nabla\xi_{j,h}^{n-1}\Vert^{2}\right)\right)
\nonumber\\
+\frac{\nu}{32}
\left(\Vert\nabla\xi_{j,h}^{n-1}\Vert^{2}
-\Vert\nabla\xi_{j,h}^{n-2}\Vert^{2}\right)
+\left(\frac{\nu}{32}-\frac{C_e}{32}\frac{\Delta t}{h}\| \nabla 
 u_{j,h}^{\prime n} \|^2\right)\|\nabla \xi^{n+1}_{j,h} \|^{2}\nonumber\\
\leq C\nu^{-3}\left(\|\xi^n_{j,h} \|^2+\|\xi^{n-1}_{j,h} \|^2+\Vert \xi_{j,h}^{n-2}\Vert^2\right)
+C\nu^{-1}\|\nabla u_{j}^{n+1}\|^{2}\|\nabla\eta_{j}^{n+1}\|^{2}\nonumber\\
+\frac{C\Delta t^5}{\nu}\left(\int_{t^{n-2}}^{t^{n+1}}\| \nabla u_{j,ttt} \|^{2}
dt\right)\|\nabla u_{j,h}^{n+1}\|^{2}
+\frac{C \Delta t^5}{\nu}\|\nabla
u_{j,h}^{\prime n}\|^{2}\left(\int_{t^{n-2}}^{t^{n+1}}\| \nabla \eta_{j,ttt}\|^{2} \text{ }dt\right)\nonumber\\
+C\nu^{-1}\left(\|\nabla\eta^n_j \|^{2}+\Vert \nabla\eta^{n-1}_j \|^{2}+\Vert \nabla \eta_j^{n-2}\Vert^2\right)\|\nabla u_{j,h}^{n+1}\|^{2}\nonumber\\
+\frac{C \Delta t^5}{\nu}\|\nabla
u_{j,h}^{\prime n}\|^{2}\left(\int_{t^{n-2}}^{t^{n+1}}\| \nabla u_{j,ttt}\|^{2} \text{ }dt\right)
+C \nu^{-1}\|p_{j}^{n+1}-q_{j,h}^{n+1}%
\|^{2}\nonumber\\
+\frac{C}{\nu\Delta t}\int_{t^{n-2}}^{t^{n+1}}\| \eta_{j,t}\|^{2}\text{ }
dt
+C\nu\|\nabla\eta_{j}^{n+1}\|^{2}
+\frac{C\Delta t^3}{\nu}%
\int_{t^{n-2}}^{t^{n+1}}\|u_{j,ttt}\|^{2} dt .\nonumber
\end{gather}

\noindent $(\frac{\nu}{32}-\frac{C_e}{32}\frac{\Delta t}{h}\| \nabla 
 u_{j,h}^{\prime n} \|^2)$ is nonnegative and thus can be eliminated from the left hand side of \eqref{eq:err2} if the timestep conditions in \eqref{err:CFL} hold. Taking the sum of (\ref{eq:err2}) from $n=2$ to $n=N-1$ and multiplying through by
$6\Delta t$, we obtain

\begin{gather}
\frac{1}{2}\Vert\xi_{j,h}^{N}\Vert^{2}+\frac{1}{2}\|3\xi_{j,h}
^{N}-\xi_{j,h}^{N-1}\|^{2}+\frac{1}{2}\|3\xi_{j,h}
^{N}-3\xi_{j,h}^{N-1}+\xi_{j,h}^{N-2}\|^{2}\label{ineq:err3}\\
+\frac{1}{4}\sum_{n=2}^{N-1}\|\xi_{j,h}^{n+1}-3\xi_{j,h}^{n}
+3\xi_{j,h}^{n-1}-\xi_{j,h}^{n-2}\Vert^{2}
+\frac{3\nu\Delta t}{16}\Vert\nabla\xi_{j,h}^{N}\Vert^{2}\nonumber\\
+\nu\sum_{n=2}^{N-1}\Delta t
\Vert\nabla\xi_{j,h}^{n+1}\Vert^{2}+\frac{3\nu\Delta t}{8}
\left(\Vert\nabla\xi_{j,h}^{N}\Vert^{2}
+\Vert\nabla\xi_{j,h}^{N-1}\Vert^{2}\right)
+\frac{3\nu\Delta t}{16}\Vert\nabla\xi_{j,h}^{N-2}\Vert^{2}\nonumber\\
\leq\frac{1}{2}\Vert\xi_{j,h}^{2}\Vert^{2}+\frac{1}{2}\|3\xi_{j,h}
^{2}-\xi_{j,h}^{1}\|^{2}+\frac{1}{2}\|3\xi_{j,h}
^{2}-3\xi_{j,h}^{1}+\xi_{j,h}^{0}\|^{2}
+\frac{3\nu\Delta t}{16}\Vert\nabla\xi_{j,h}^{2}\Vert^{2}\nonumber\\
+\frac{3\nu\Delta t}{8}
\left(\Vert\nabla\xi_{j,h}^{2}\Vert^{2}
+\Vert\nabla\xi_{j,h}^{1}\Vert^{2}\right)+\frac{3\nu\Delta t}{16}\Vert\nabla\xi_{j,h}^{0}\Vert^{2}
+\Delta t\sum_{n=0}^{N-1} C\nu^{-3}\|\xi^n_{j,h} \|^{2} \nonumber\\
+\Delta t\sum_{n=0}^{N-1} C\nu^{-1}\|\eta^n_{j} \|^{2} 
+\Delta t \sum_{n=2}^{N-1} \Bigg \lbrace C \nu
^{-1}\|\nabla u_{j}^{n+1}\|^{2}\|\nabla\eta_{j}^{n+1}\|^{2}\nonumber\\
+\frac{C\Delta
t^5}{\nu}\left(\int_{t^{n-2}}^{t^{n+1}}\| \nabla u_{j,ttt} \|^{2} \text{ }dt\right)
+C \Delta t^4h\left(\int_{t^{n-2}}^{t^{n+1}}\|
\nabla \eta_{j,ttt} \|^{2} dt\right)\nonumber\\
+C \Delta t^4h\left(\int_{t^{n-2}}^{t^{n+1}}\|
\nabla u_{j,ttt} \|^{2} dt\right)
+C \nu^{-1}\|p_{j}^{n+1}-q_{j,h}
^{n+1}\|^{2}\nonumber\\
+\frac{C}{\nu\Delta t}\int_{t^{n-2}}^{t^{n+1}}\|\eta_{j,t}\|^{2} dt 
+C\nu \|\nabla \eta_{j}^{n+1}\|^{2}+\frac{C{\Delta t^3}}{\nu}\int_{t^{n-2}
}^{t^{n+1}}\| u_{j,ttt}\| ^{2} \text{ }dt\Bigg\rbrace .\nonumber
\end{gather}

\noindent Applying interpolation inequalities to the above inequality gives

% the second line below can be replaced by \leq\frac{1}{2}||\xi_{j,h}^{0}||^{2}+\Delta t\sum_{n=0}^{N-1} \frac{C}{\nu h}(\sum_{i=0}^{n-1}\frac{\Delta t}{\nu}\|f_j^{i+1}\|^2_*+\|u_{j,h}^0\|^2+\frac{\nu \Delta t}{2}\|\nabla u_{j,h}^0\|^2)\|\xi^n_{j,h} \|^{2}\nonumber\\
\begin{gather}
\frac{1}{2}\Vert\xi_{j,h}^{N}\Vert^{2}+\frac{1}{2}\|3\xi_{j,h}
^{N}-\xi_{j,h}^{N-1}\|^{2}+\frac{1}{2}\|3\xi_{j,h}
^{N}-3\xi_{j,h}^{N-1}+\xi_{j,h}^{N-2}\|^{2}\label{ineq:err3}\\
+\frac{1}{4}\sum_{n=2}^{N-1}\|\xi_{j,h}^{n+1}-3\xi_{j,h}^{n}
+3\xi_{j,h}^{n-1}-\xi_{j,h}^{n-2}\Vert^{2}
+\frac{3\nu\Delta t}{16}\Vert\nabla\xi_{j,h}^{N}\Vert^{2}\nonumber\\
+\nu\sum_{n=2}^{N-1}\Delta t
\Vert\nabla\xi_{j,h}^{n+1}\Vert^{2}+\frac{3\nu\Delta t}{8}
\left(\Vert\nabla\xi_{j,h}^{N}\Vert^{2}
+\Vert\nabla\xi_{j,h}^{N-1}\Vert^{2}\right)
+\frac{3\nu\Delta t}{16}\Vert\nabla\xi_{j,h}^{N-2}\Vert^{2}\nonumber\\
\leq\frac{1}{2}\Vert\xi_{j,h}^{2}\Vert^{2}+\frac{1}{2}\|3\xi_{j,h}
^{2}-\xi_{j,h}^{1}\|^{2}+\frac{1}{2}\|3\xi_{j,h}
^{2}-3\xi_{j,h}^{1}+\xi_{j,h}^{0}\|^{2}
+\frac{3\nu\Delta t}{16}\Vert\nabla\xi_{j,h}^{2}\Vert^{2}\nonumber\\
+\frac{3\nu\Delta t}{8}
\left(\Vert\nabla\xi_{j,h}^{2}\Vert^{2}
+\Vert\nabla\xi_{j,h}^{1}\Vert^{2}\right)
+\frac{3\nu\Delta t}{16}\Vert\nabla\xi_{j,h}^{0}\Vert^{2}
+\Delta t\sum_{n=0}^{N-1} C\nu^{-3}\|\xi^n_{j,h} \|^{2} \nonumber\\
+C\frac{h^{2k}}{\nu}\| |\nabla u_j|
\|^2_{\infty, 0}\| |u_j|\|^2_{2, k+1}+C\frac{\Delta t^6}{\nu}\| |\nabla u_{j, ttt}|\|^2_{2,0}
+C \frac{h^{2k}}{\nu}\||\nabla
u_{j}|\|^{2}_{2,k+1} \nonumber\\
+C\Delta t^4 h^{2k+1}\Vert \vert \nabla u_{j, ttt}\vert\Vert^2_{2,k}+C\Delta t^5 h  \| |\nabla u_{j, ttt}|\|^2_{2,0}
+C\frac{h^{2s+2}}{\nu}\||p_j|\|^{2}_{2,s+1} \nonumber\\
+Ch^{2k+2}\nu^{-1}\| |u_{j,t}|\|^{2}_{2,k+1}
+C\nu h^{2k}\| |\nabla u_{j} | \|_{2,k}^{2}+\frac{C{\Delta t}^{4}}{\nu}\| |
u_{j,ttt}| \| ^{2}_{2,0}.\nonumber
\end{gather}

\noindent Further applying the discrete Gronwall inequality (Girault and Raviart \cite{GR79}, p. 176) yields

\begin{gather}
\frac{1}{2}\Vert\xi_{j,h}^{N}\Vert^{2}+\frac{1}{2}\|3\xi_{j,h}
^{N}-\xi_{j,h}^{N-1}\|^{2}+\frac{1}{2}\|3\xi_{j,h}
^{N}-3\xi_{j,h}^{N-1}+\xi_{j,h}^{N-2}\|^{2}\label{ineq:errlast1}\\
+\frac{1}{4}\sum_{n=2}^{N-1}\|\xi_{j,h}^{n+1}-3\xi_{j,h}^{n}
+3\xi_{j,h}^{n-1}-\xi_{j,h}^{n-2}\Vert^{2}
+\frac{3\nu\Delta t}{16}\Vert\nabla\xi_{j,h}^{N}\Vert^{2}\nonumber\\
+\nu\sum_{n=2}^{N-1}\Delta t
\Vert\nabla\xi_{j,h}^{n+1}\Vert^{2}+\frac{3\nu\Delta t}{8}
\left(\Vert\nabla\xi_{j,h}^{N}\Vert^{2}
+\Vert\nabla\xi_{j,h}^{N-1}\Vert^{2}\right)
+\frac{3\nu\Delta t}{16}\Vert\nabla\xi_{j,h}^{N-2}\Vert^{2}\nonumber\\
\leq exp\left(\frac{C N\Delta t}{\nu^2}\right)\Bigg \lbrace\frac{1}{2}\Vert\xi_{j,h}^{2}\Vert^{2}+\frac{1}{2}\|3\xi_{j,h}
^{2}-\xi_{j,h}^{1}\|^{2}+\frac{1}{2}\|3\xi_{j,h}
^{2}-3\xi_{j,h}^{1}+\xi_{j,h}^{0}\|^{2}\nonumber\\
+\frac{3\nu\Delta t}{16}\Vert\nabla\xi_{j,h}^{2}\Vert^{2}
+\frac{3\nu\Delta t}{8}
\left(\Vert\nabla\xi_{j,h}^{2}\Vert^{2}
+\Vert\nabla\xi_{j,h}^{1}\Vert^{2}\right)
+\frac{3\nu\Delta t}{16}\Vert\nabla\xi_{j,h}^{0}\Vert^{2}\nonumber\\
+C\frac{h^{2k}}{\nu}\| |\nabla u_j|
\|^2_{\infty, 0}\| |u_j|\|^2_{2, k+1}+C\frac{\Delta t^6}{\nu}\| |\nabla u_{j, ttt}|\|^2_{2,0}
+C \frac{h^{2k}}{\nu}\||\nabla
u_{j}|\|^{2}_{2,k+1} \nonumber\\
+C\Delta t^4 h^{2k+1}\Vert \vert \nabla u_{j, ttt}\vert\Vert^2_{2,k}+C\Delta t^5 h  \| |\nabla u_{j, ttt}|\|^2_{2,0}
+C\frac{h^{2s+2}}{\nu}\||p_j|\|^{2}_{2,s+1} \nonumber\\
+Ch^{2k+2}\nu^{-1}\| |u_{j,t}|\|^{2}_{2,k+1}
+C\nu h^{2k}\| |\nabla u_{j} | \|_{2,k}^{2}+\frac{C{\Delta t}^{4}}{\nu}\| |
u_{j,ttt}| \| ^{2}_{2,0}\Bigg\rbrace\text{ .}\nonumber
\end{gather}
Applying triangle inequality on the error and absorbing constants gives \eqref{ineq:errlast2}.
\end{proof}

In many applications, e.g., numerical weather prediction, the ensemble mean is usually the main prediction of the future state and thus its behavior is of special interest. Herein we give an error estimate of the ensemble mean computed from our algorithm, which shows the algorithm's ensemble mean does converge to the true ensemble mean with optimal convergence rate.

Let $\langle e \rangle^n =\langle u\rangle^n - \langle u_h\rangle^n$ be the error between the true ensemble mean and the ensemble mean computed from (En-BlendedBDF). Then we have the following error estimate.
\begin{theorem}
[Convergence of ensemble mean]\label{th:errEnMean} Consider the
method (\text{En-BlendedBDF}). If the following conditions hold
\begin{align}
C_e \frac{\Delta t}{\nu h}
\Vert \nabla u_{j,h}^{\prime n}\Vert^2\leq 1,  \qquad j= 1, ..., J,\label{err:CFL}
\end{align}
where $C_e$ is a constant that depends only on the domain and the minimum angle of the mesh and is independent of the timestep, then there is a positive constant $C$
independent of the mesh width and timestep such that
\begin{gather}
\frac{1}{2}\Vert \langle e\rangle^{N}\Vert^{2}
\leq exp\left(\frac{C T}{\nu^2}\right)\frac{1}{J}\sum_{j=1}^J\Bigg \lbrace\frac{1}{2}\Vert e_{j}^{2}\Vert^{2}+\frac{1}{2}\|3e_{j}
^{2}-e_{j}^{1}\|^{2}+\frac{1}{2}\|3e_{j}
^{2}-3e_{j}^{1}+e_{j}^{0}\|^{2}\nonumber\\
+\frac{3\nu\Delta t}{16}\Vert\nabla e_{j}^{2}\Vert^{2}
+\frac{3\nu\Delta t}{8}
\left(\Vert\nabla e_{j}^{2}\Vert^{2}
+\Vert\nabla e_{j}^{1}\Vert^{2}\right)
+\frac{3\nu\Delta t}{16}\Vert\nabla e_{j}^{0}\Vert^{2}\label{en1}\\
+C\frac{h^{2k}}{\nu}\| |\nabla u_j|
\|^2_{\infty, 0}\| |u_j|\|^2_{2, k+1}+C\frac{\Delta t^6}{\nu}\| |\nabla u_{j, ttt}|\|^2_{2,0}
+C \frac{h^{2k}}{\nu}\||\nabla
u_{j}|\|^{2}_{2,k+1} \nonumber\\
+C\Delta t^4 h^{2k+1}\Vert \vert \nabla u_{j, ttt}\vert\Vert^2_{2,k}+C\Delta t^5 h  \| |\nabla u_{j, ttt}|\|^2_{2,0}
+C\frac{h^{2s+2}}{\nu}\||p_j|\|^{2}_{2,s+1} \nonumber\\
+Ch^{2k+2}\nu^{-1}\| |u_{j,t}|\|^{2}_{2,k+1}
+C\nu h^{2k}\| |\nabla u_{j} | \|_{2,k}^{2}+\frac{C{\Delta t}^{4}}{\nu}\| |
u_{j,ttt}| \| ^{2}_{2,0}\Bigg\rbrace\text{ ,}\nonumber
\end{gather}
and 
\begin{gather}
\nu\sum_{n=2}^{N-1}\Delta t
\Vert\nabla \langle e\rangle^{n+1}\Vert^{2}
\leq exp\left(\frac{C T}{\nu^2}\right)\frac{1}{J}\sum_{j=1}^J\Bigg \lbrace\frac{1}{2}\Vert e_{j}^{2}\Vert^{2}+\frac{1}{2}\|3e_{j}
^{2}-e_{j}^{1}\|^{2}+\frac{3\nu\Delta t}{16}\Vert\nabla e_{j}^{2}\Vert^{2}\nonumber\\
+\frac{1}{2}\|3e_{j}
^{2}-3e_{j}^{1}+e_{j}^{0}\|^{2}
+\frac{3\nu\Delta t}{8}
\left(\Vert\nabla e_{j}^{2}\Vert^{2}
+\Vert\nabla e_{j}^{1}\Vert^{2}\right)
+\frac{3\nu\Delta t}{16}\Vert\nabla e_{j}^{0}\Vert^{2}\label{en2}\\
+C\frac{h^{2k}}{\nu}\| |\nabla u_j|
\|^2_{\infty, 0}\| |u_j|\|^2_{2, k+1}+C\frac{\Delta t^6}{\nu}\| |\nabla u_{j, ttt}|\|^2_{2,0}
+C \frac{h^{2k}}{\nu}\||\nabla
u_{j}|\|^{2}_{2,k+1} \nonumber\\
+C\Delta t^4 h^{2k+1}\Vert \vert \nabla u_{j, ttt}\vert\Vert^2_{2,k}+C\Delta t^5 h  \| |\nabla u_{j, ttt}|\|^2_{2,0}
+C\frac{h^{2s+2}}{\nu}\||p_j|\|^{2}_{2,s+1} \nonumber\\
+Ch^{2k+2}\nu^{-1}\| |u_{j,t}|\|^{2}_{2,k+1}
+C\nu h^{2k}\| |\nabla u_{j} | \|_{2,k}^{2}+\frac{C{\Delta t}^{4}}{\nu}\| |
u_{j,ttt}| \| ^{2}_{2,0}\Bigg\rbrace\text{ .}\nonumber
\end{gather}
\end{theorem}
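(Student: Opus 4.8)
The plan is to derive both estimates \eqref{en1} and \eqref{en2} as immediate corollaries of the per-realization bound \eqref{ineq:errlast2} of Theorem~\ref{th:errEnEVBDF2AB2}, using nothing beyond convexity of $\|\cdot\|^{2}$ in the ensemble index $j$; in particular no fresh estimates of the nonlinear, pressure, or consistency terms are needed. First I would observe that, by the definition \eqref{Enmean} of the ensemble mean, linearity gives
\[
\langle e\rangle^{n}=\langle u\rangle^{n}-\langle u_{h}\rangle^{n}=\frac{1}{J}\sum_{j=1}^{J}\bigl(3e_{j}^{n}-3e_{j}^{n-1}+e_{j}^{n-2}\bigr),
\]
so that the Cauchy--Schwarz (Jensen) inequality in $j$ yields
\[
\|\langle e\rangle^{N}\|^{2}\le\frac{1}{J}\sum_{j=1}^{J}\|3e_{j}^{N}-3e_{j}^{N-1}+e_{j}^{N-2}\|^{2}
\quad\text{and}\quad
\|\nabla\langle e\rangle^{n+1}\|^{2}\le\frac{1}{J}\sum_{j=1}^{J}\|\nabla(3e_{j}^{n+1}-3e_{j}^{n}+e_{j}^{n-1})\|^{2}.
\]

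For \eqref{en1}, the quantity $\tfrac{1}{2}\|3e_{j}^{N}-3e_{j}^{N-1}+e_{j}^{N-2}\|^{2}$ is one of the nonnegative summands on the left-hand side of \eqref{ineq:errlast2}, hence it is bounded by the whole right-hand side of \eqref{ineq:errlast2}. Since every constant there, including the one inside $\exp(CT/\nu^{2})$, depends only on the domain, the mesh regularity, $\nu$ and $T$ and not on $j$, averaging this bound over $j=1,\dots,J$ and combining with the first displayed inequality above produces \eqref{en1} at once.

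For \eqref{en2}, I would first use the triangle inequality to replace $\|\nabla(3e_{j}^{n+1}-3e_{j}^{n}+e_{j}^{n-1})\|^{2}$ by $C(\|\nabla e_{j}^{n+1}\|^{2}+\|\nabla e_{j}^{n}\|^{2}+\|\nabla e_{j}^{n-1}\|^{2})$, then multiply by $\nu\Delta t$ and sum over $n=2,\dots,N-1$. After a shift of the summation index each term $\nu\Delta t\|\nabla e_{j}^{m}\|^{2}$ with $1\le m\le N$ occurs with multiplicity at most $3$, so the sum is dominated by $3\nu\Delta t\sum_{m=1}^{N}\|\nabla e_{j}^{m}\|^{2}$. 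The block $\nu\sum_{n=2}^{N-1}\Delta t\|\nabla e_{j}^{n+1}\|^{2}$ together with the endpoint pieces $\tfrac{3\nu\Delta t}{16}\|\nabla e_{j}^{N}\|^{2}$, $\tfrac{3\nu\Delta t}{8}(\|\nabla e_{j}^{N}\|^{2}+\|\nabla e_{j}^{N-1}\|^{2})$ and $\tfrac{3\nu\Delta t}{16}\|\nabla e_{j}^{N-2}\|^{2}$ all sit on the left-hand side of \eqref{ineq:errlast2}, while the leftover endpoint contributions $\nu\Delta t\|\nabla e_{j}^{1}\|^{2}$ and $\nu\Delta t\|\nabla e_{j}^{2}\|^{2}$ are exactly of the type of the initialization terms on its right-hand side; hence $\nu\Delta t\sum_{m=1}^{N}\|\nabla e_{j}^{m}\|^{2}$ is bounded by $C$ times the right-hand side of \eqref{ineq:errlast2}. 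Averaging over $j$ and inserting the second displayed inequality above yields \eqref{en2}.

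The one step that needs genuine care is this bookkeeping for \eqref{en2}: after the index shift one must check that every gradient term produced by the triangle inequality, in particular those at the summation endpoints $n=2$ and $n=N-1$, is accounted for either among the dissipation and endpoint terms on the left of \eqref{ineq:errlast2} or among the data terms on its right, and that the multiplicities stay bounded by an absolute constant uniformly in $N$. This is routine and costs only a fixed constant factor. (Alternatively, one could split $\langle e\rangle=\langle\eta\rangle+\langle\xi_{h}\rangle$, average the intermediate estimate \eqref{ineq:errlast1} for $\xi_{j,h}$ over $j$ by the same convexity argument, and control $\langle\eta\rangle$ by the interpolation estimates, but routing everything through the final bound \eqref{ineq:errlast2} is shorter.)
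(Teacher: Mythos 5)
Your proposal is correct and rests on exactly the same mechanism as the paper's own argument: convexity of $\|\cdot\|^2$ in the ensemble index plus the per-realization bound \eqref{ineq:errlast2}, with no new estimates of nonlinear, pressure, or consistency terms. The one difference is interpretational, and it is what generates all of your extra bookkeeping. You read $\langle e\rangle^{n}$ literally through the extrapolated mean \eqref{Enmean}, i.e.\ $\langle e\rangle^{n}=\frac{1}{J}\sum_{j}(3e_{j}^{n}-3e_{j}^{n-1}+e_{j}^{n-2})$, which forces you to route \eqref{en1} through the term $\frac12\|3e_j^{N}-3e_j^{N-1}+e_j^{N-2}\|^{2}$ and \eqref{en2} through a triangle inequality, an index shift, and absorption of the leftover $\nu\Delta t\|\nabla e_j^{1}\|^{2}$, $\nu\Delta t\|\nabla e_j^{2}\|^{2}$ endpoint pieces into the initialization terms (so your constants in front of the initial-error terms are inflated by a fixed factor relative to the exact coefficients displayed in \eqref{en2}, which is harmless but worth acknowledging). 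The paper instead treats $\langle e\rangle^{n}$ in this theorem as the plain arithmetic average $\frac{1}{J}\sum_{j}e_{j}^{n}$ of the realization errors: then $\|\langle e\rangle^{N}\|^{2}\le\frac1J\sum_j\|e_j^{N}\|^{2}$ and $\sum_{n=2}^{N-1}\|\nabla\langle e\rangle^{n+1}\|^{2}\le\frac1J\sum_j\sum_{n=2}^{N-1}\|\nabla e_j^{n+1}\|^{2}$, and both \eqref{en1} and \eqref{en2} drop out in one line from the terms $\frac12\|e_j^{N}\|^{2}$ and $\nu\sum_{n=2}^{N-1}\Delta t\|\nabla e_j^{n+1}\|^{2}$ already sitting on the left of \eqref{ineq:errlast2}, with the exact constants as stated. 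So your proof is valid for the extrapolated-mean reading (at the cost of absorbed constants), while the paper's shorter proof shows the intended object is the simple ensemble average; either way the key idea is identical, and your argument would simplify to the paper's if you adopt that reading.
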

\begin{proof}
With
\begin{align*}
\Vert \langle e \rangle^n\Vert^2=\Vert \frac{1}{J}\sum_{j=1}^J e_j^n\Vert^2\leq \frac{1}{J}\sum_{j=1}^J\Vert e_j^n\Vert^2,
\end{align*}
\eqref{en1} follows directly from Theorem \ref{th:errEnEVBDF2AB2}. Similarly, we have
\begin{align*}
\sum_{n=2}^{N-1}\Vert \langle \nabla e \rangle^n\Vert^2=\sum_{n=2}^{N-1}\Vert \frac{1}{J}\sum_{j=1}^J \nabla e_j^n\Vert^2\leq \sum_{n=2}^{N-1}\frac{1}{J}\sum_{j=1}^J\Vert \nabla e_j^n\Vert^2,
\end{align*}
and thus \eqref{en2} also follows directly from Theorem \ref{th:errEnEVBDF2AB2}.
\end{proof}

}
\section{Numerical Experiments}
We perform numerical experiments for the proposed method on two test problems. First, we verify predicted convergence rates on a 2d test problem with known analytical solution. We also compare accuracy of (En-BlendedBDF) with that of the previously studied (En-BDF2AB2) method (see \cite{J15}). The (En-BDF2AB2) method is given by
\begin{gather}
\frac{3u_{j}^{n+1}-4u_{j}^{n}+u_j^{n-1}}{2\Delta t}+<u>^{n}\cdot\nabla u_{j}^{n+1}\tag{\textsc{En-BDF2AB2}}\label{EnBDF2AB2}\\
\qquad\qquad\qquad\qquad\qquad+u_j^{\prime n}\cdot\nabla \left(2u_{j}^{n}-u_j^{n-1}\right)+\nabla p_{j}^{n+1}-\nu\Delta u_{j}^{n+1}=f_{j}^{n+1}\text{, \ }\nonumber
\\
\nabla\cdot u_{j}^{n+1}=0.\nonumber
\end{gather}
Next, we test the ability of the method to simulate high Reynolds number, complex flows. The method is tested on the well-known 3D Ethier-Steinman flow problem with high Reynolds number and grad-div stabilization is added to the method to relax the time step condition. In all tests, we use Taylor-Hood P2-P1 elements. The code
was implemented using the software package FreeFem++, \cite{H12}.
\subsection{Convergence}\label{convergence}
Our first experiment tests the predicted convergence rates for the method. We take the analytical solution of Navier-Stokes equations from \cite{GQ98}, prescribed in the unit square $\Omega = [0, 1]^2$ 
\begin{align*}
u_{true}& =(-g(t)\cos x\sin y, +g(t)\sin x\cos y)^T, \\
p_{true}& =-\frac{1}{4}[\cos (2x)+\cos (2y)]g^{2}(t),\quad\text{where}\ g(t)=\sin (2t),
\end{align*}%
with source term $f(x,y,t)=[g^{\prime }(t)+2\nu g(t)](-\cos x\sin y, \sin x\cos y)^T$. We take the viscosity $\nu=0.01$ and simulation time $T=1$.  Inhomogeneous Dirichlet boundary condition ${u}=u_{true}\text{ on }\partial\Omega$ is enforced nodally on the boundary.

We consider a set of two realizations of Navier-Stokes equations $u_{1,2}=(1\pm\epsilon)u_{true}$, $\epsilon=10^{-3}$, which correspond to two different initial conditions
$u_{1,2}^{0}=(1\pm\epsilon)u^{0}_{true}$ respectively. In the simulation, the source term and boundary condition for each realization need to be adjusted accordingly. As the method is a three-step method, we need $u^1_1$, $u^1_2$, $u^2_1$, $u^2_2$ as well to get the algorithm start to run. For this test problem, we know the exact solution so we just take the exact solution at each corresponding instant and interpolate it in the finite element space. We then calculate errors and convergence rates by computing approximations with both (En-BlendedBDF) and (En-BDF2) on 5 successive mesh refinements with $h=2\Delta t$. From Table 1 and Table 2, the convergence rate is close to 2, which is optimal according to our theoretical results. In Tables 3, 4, 5 and 6, we compare the error computed with (En-BlendedBDF) and (En-BDF2). As we can see from the tables, the error computed with (En-BlendedBDF) is noticeably smaller as a consequence of smaller temporal errors.

\begin{center}
\end{center}

\begin{table}[t]
\centering
\begin{tabular}
[c]{|l|c|c|c|c|}\hline
$\Delta t$ & $\|u_{1}-u_{1,h}\|_{\infty, 0}$ & rate & $\|\nabla u_{1}- \nabla
u_{1,h}\|_{2,0}$ & rate\\\hline
$0.05$ &  $2.11868\cdot10^{-4}$ & -- & $3.33272\cdot10^{-3}$ &
--\\
$0.025 $ & $5.86519\cdot10^{-5}$ & 1.8529 & $6.46582\cdot
10^{-4}$ & 2.3658\\
$0.0125 $& $1.55198\cdot10^{-5}$ & 1.9181 & $1.50220\cdot
10^{-4}$ & 2.1058\\
$0.00625$ & $3.99025\cdot10^{-6}$ & 1.9596 & $3.72779\cdot
10^{-5}$ & 2.0107\\
$0.003125$ & $1.01142\cdot10^{-6}$ & 1.9800 & $9.36355
\cdot10^{-6}$ & 1.9932\\\hline
\end{tabular}
\caption{(\text{En-BlendedBDF}): Errors and convergence rates for the first ensemble member }%
\label{tab:u1}%
\end{table}

\begin{table}[t]
\centering
\begin{tabular}
[c]{|l|c|c|c|c|}\hline
$\Delta t$ & $\|u_{2}-u_{2,h}\|_{\infty, 0}$ & rate & $\|\nabla u_{2}- \nabla
u_{2,h}\|_{2,0}$ & rate\\\hline
$0.05$ & $2.11487\cdot10^{-4}$ & -- & $3.32141\cdot10^{-3}$
& --\\
$0.025$ & $5.85514\cdot10^{-5}$ & 1.8528& $6.44810\cdot
10^{-4}$ & 2.3648\\
$0.0125 $ & $1.54929\cdot10^{-5}$ & 1.9181& $1.49864\cdot
10^{-4}$ & 2.1052\\
$0.00625 $ & $3.98337\cdot10^{-6}$ & 1.9596 & $3.71937\cdot
10^{-5}$ & 2.0105\\
$0.003125 $ & $1.00968\cdot10^{-6}$ & 1.9801 & $9.34265\cdot
10^{-6}$ & 1.9932\\
\hline
\end{tabular}
\caption{(\text{En-BlendedBDF}): Errors and convergence rates for the second ensemble member}%
\label{tab:u2}%
\end{table}

\begin{table}[t]
\centering
\begin{tabular}
[c]{|l|c|c|c|c|}\hline
$\Delta t$ & En-BlendedBDF  & En-BDF2\\\hline
$0.05$ & $2.11868\cdot10^{-4}$ &  $4.85642\cdot10^{-4}$\\
$0.025$ & $5.86519\cdot10^{-5}$ & $1.26128\cdot
10^{-4}$ \\
$0.0125 $ & $1.55198\cdot10^{-5}$ & $3.21716\cdot
10^{-5}$ \\
$0.00625 $ & $3.99025\cdot10^{-6}$  & $8.12342\cdot
10^{-6}$ \\
$0.003125 $ & $1.01142\cdot10^{-6}$ &  $2.04078\cdot
10^{-6}$ \\
\hline
\end{tabular}
\caption{$\Vert u_1-u_{1,h}\Vert_{\infty, 0}$:  Comparison of (En-BlendedBDF) and (En-BDF2)}
\label{tab:u1Comparison}
\end{table}

\begin{table}[t]
\centering
\begin{tabular}
[c]{|l|c|c|c|c|}\hline
$\Delta t$ & En-BlendedBDF  & En-BDF2\\\hline
$0.05$ & $2.11487\cdot10^{-4}$ &  $4.84794\cdot10^{-4}$\\
$0.025$ & $5.85514\cdot10^{-5}$ & $1.25913\cdot
10^{-4}$ \\
$0.0125 $ & $1.54929\cdot10^{-5}$ & $3.21161\cdot
10^{-5}$ \\
$0.00625 $ & $3.98337\cdot10^{-6}$  & $8.10943\cdot
10^{-6}$ \\
$0.003125 $ & $1.00968\cdot10^{-6}$ &  $2.03726\cdot
10^{-6}$ \\
\hline
\end{tabular}
\caption{$\Vert u_2-u_{2,h}\Vert_{\infty, 0}$:  Comparison of (En-BlendedBDF) and (En-BDF2)}
\label{tab:u2Comparison}
\end{table}

\begin{table}[t]
\centering
\begin{tabular}
[c]{|l|c|c|c|c|}\hline
$\Delta t$ & En-BlendedBDF  & En-BDF2\\\hline
$0.05$ & $3.33272\cdot10^{-3}$ &  $5.11092\cdot10^{-3}$\\
$0.025$ & $6.46582\cdot10^{-4}$ & $1.18810\cdot
10^{-3}$ \\
$0.0125 $ & $1.50220\cdot10^{-4}$ & $2.92502\cdot
10^{-4}$ \\
$0.00625 $ & $3.72779\cdot10^{-5}$  & $7.31031\cdot
10^{-5}$ \\
$0.003125 $ & $9.36355\cdot10^{-6}$ &  $1.83094\cdot
10^{-5}$ \\
\hline
\end{tabular}
\caption{$\Vert \nabla u_1- \nabla u_{1,h}\Vert_{2, 0}$:  Comparison of (En-BlendedBDF) and (En-BDF2)}
\label{tab:u1_H1_Comparison}
\end{table}

\begin{table}[t]
\centering
\begin{tabular}
[c]{|l|c|c|c|c|}\hline
$\Delta t$ & En-BlendedBDF  & En-BDF2\\\hline
$0.05$ & $3.32141\cdot10^{-3}$ &  $5.09708\cdot10^{-3}$\\
$0.025$ & $6.44810\cdot10^{-4}$ & $1.18528\cdot
10^{-3}$ \\
$0.0125 $ & $1.49864\cdot10^{-4}$ & $2.91837\cdot
10^{-4}$ \\
$0.00625 $ & $3.71937\cdot10^{-5}$  & $7.29391\cdot
10^{-5}$ \\
$0.003125 $ & $9.34265\cdot10^{-6}$ &  $1.82684\cdot
10^{-5}$ \\
\hline
\end{tabular}
\caption{$\Vert \nabla u_2-\nabla u_{2,h}\Vert_{2, 0}$:  Comparison of (En-BlendedBDF) and (En-BDF2)}
\label{tab:u2_H1_Comparison}
\end{table}

\subsection{3D Ethier-Steinman Flow}\label{3d}
We test our method on the 3D Ethier-Steinman flow problem for which the analytical solutions are known, \cite{ES94}. The flow has complex structures due to its nontrivial helicity \cite{OR10}, and thus is often used to test numerical methods for Navier-Stokes equations. The 3D analytical solutions on a $[0, 1]^3$ box are given by
\begin{gather}\label{3D:eq}
u_{1} = -a (e^{ax}\sin(ay+dz)+e^{az}\cos(ax+dy))e^{-\nu d^2t}\text{ ,}\\
u_{2} = -a (e^{ay}\sin(az+dx)+e^{ax}\cos(ay+dz))e^{-\nu d^2t}\text{ ,}\nonumber\\
u_{3}= -a (e^{az}\sin(ax+dy)+e^{ay}\cos(az+dx))e^{-\nu d^2t}\text{ ,}\nonumber\\
p =-\frac{a^2}{2}(e^{2ax}+e^{2ay}+e^{2az}+2\sin(ax+dy)\cos(az+dx)e^{a(y+z)}\nonumber\\
+2\sin(ay+dz)\cos(ax+dy)e^{a(z+x)}+2\sin(az+dx)\cos(ay+dz)e^{a(x+y)})e^{-2\nu d^2t}.\nonumber
\end{gather}
Figure \ref{Fig:3d visualization} shows the flow structure of the test problem with streamribbons in the box, velocity streamlines and speed contours on the sides. 
\begin{figure}[htbp]
\begin{center}
\makebox[\textwidth][c]{\includegraphics[height=6.7cm, width=20.0cm]{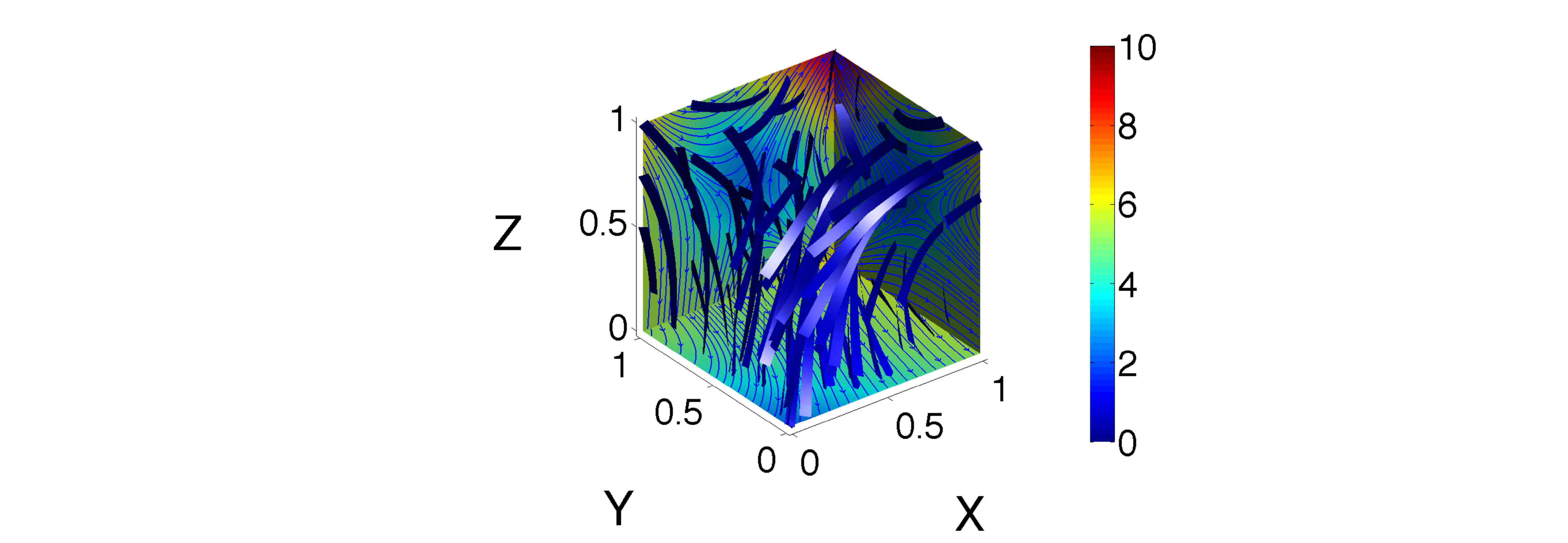}} 
\caption{Flow structure for $\nu=0.001$, $\Delta t=0.02$}
\label{Fig:3d visualization}
\end{center}
\end{figure}

We simulate two realizations this test with perturbed initial conditions generated in the way as in the Section 5.1. The purpose of this test is to show that for high Reynolds number, the time step condition of our method can be relaxed by adding grad-div stabilization $\gamma (\nabla \cdot u^{n+1}_{j,h}, \nabla \cdot v_h) $ and the stabilized method can still give reasonable approximations. As we do not test accuracy here, all tests are run on a relatively coarse mesh and moderately large time steps to save computational time. We take $a=1.25, d=2.25$ and the kinematic viscosity $\nu=0.001$ in \eqref{3D:eq} and consider two realizations with perturbation parameters $\epsilon_{1}=10^{-3}$ and $\epsilon_{2}=-10^{-3}$. The test is run on a coarse mesh with mesh size $h=0.1$. We take time step $\Delta t=0.02$ and run the simulation from $t=0$ to $t=1$. (En-BlendedBDF) encounters numerical instability and the kinetic energy quickly blows up. On the other hand, adding the grad-div stabilization term stabilized the method and gave acceptable approximations. We plot kinetic energy of averaged velocity computed with different stabilization parameter $\gamma$ in Figure \ref{Fig: Energy002}. For $\gamma=0$, which means there is no stabilization, we can see the method is unstable while adding grad-div stabilization makes the method stable and the computed averaged velocity tracks the exact solution pretty well considering the coarse mesh and relatively large time step used. It is worth noting that adding grad-div stabilization introduces numerical errors as one can see from Figure \ref{Fig: Energy002} that the method with $\gamma=0.1$ gives better approximation than the method with $\gamma=1$ which introduces more numerical errors. Nevertheless, if $\gamma$ is too small, it may not be able to stabilize the method, as shown in Figure \ref{Fig: Energy002} the stabilization with $\gamma=0.01$ managed to stabilize the simulation for a short time but the method becomes unstable eventually. The calibration of the stabilization parameter is an essential issue in practice.

\begin{figure}[!h]
\centering
\includegraphics[width=13.5cm, height=4.8cm]{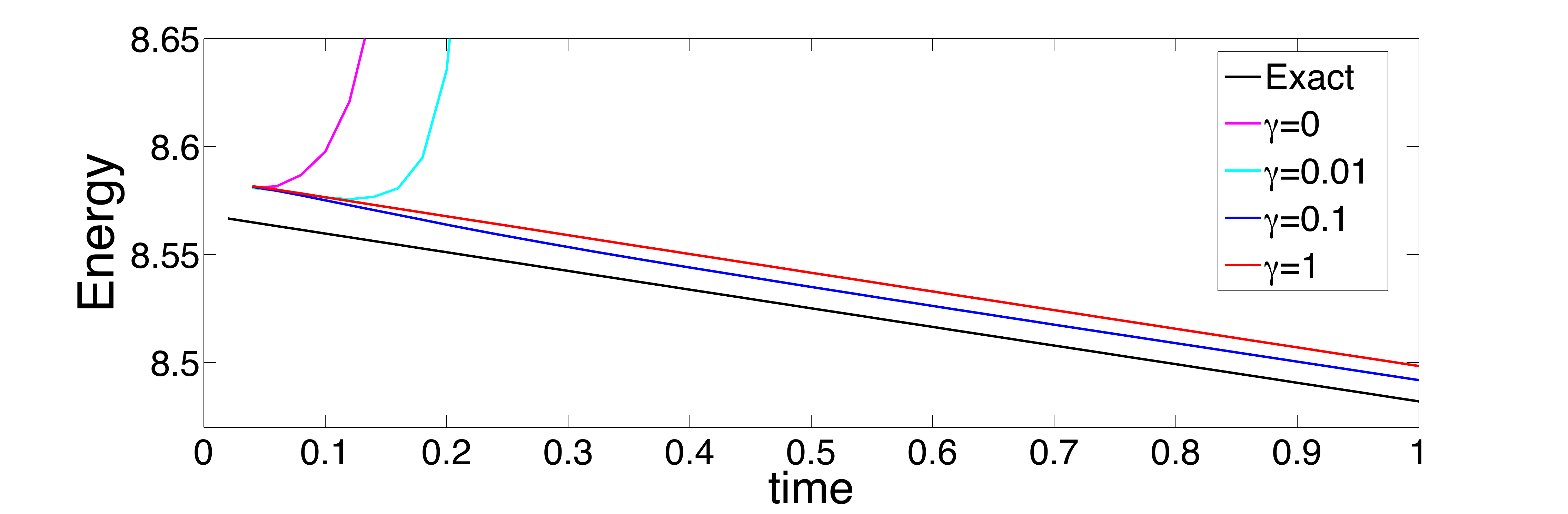} 
\caption{Kinetic Energy for $\nu=0.001$, $\Delta t=0.02$}
\label{Fig: Energy002}
\end{figure}

\section{Conclusion}
The recently developed ensemble simulation methods to efficiently compute an ensemble of fluid flow equations open a new path to quantifying uncertainty and predicting flow behaviors. In this paper, we presented a second order ensemble method based on a blended BDF time stepping scheme with the optimal error constant. This method computes all ensemble members at each timestep in one pass, taking  advantage of the fact that all members have the same coefficient matrix. Compared with the only existing second order method studied in \cite{J15}, this method has noticeably improved accuracy, as is shown in numerical tests. Further research will include applying the method to the computation of the probability distributions of statistics of interest, which are outputs of certain partial differential equations, and investigating regularization methods for flows at high Reynolds number.

\appendix
\section{Proof of Lemma \ref{lm}}\label{App:AppendixA}
{\allowdisplaybreaks
\begin{proof}
To prove \eqref{cons1}, we first rewrite
\begin{gather}
10(u^{n+1}-u^n)-5(u^n-u^{n-1})+(u^{n-1}-u^{n-2})-6\Delta t u_t^{n+1}\nonumber\\
=10\int_{t^n}^{t^{n+1}}u_t dt-5\int_{t^{n-1}}^{t^n}u_t dt +\int_{t^{n-2}}^{t^{n-1}}u_t dt-6\Delta t u_t^{n+1}\nonumber\\
=10\int_{t^n}^{t^{n+1}}\frac{d}{dt}(t-t^n) u_t dt-5\int_{t^{n-1}}^{t^n}\frac{d}{dt}(t-t^{n-1}) u_t dt \nonumber\\
+\int_{t^{n-2}}^{t^{n-1}}\frac{d}{dt}(t-t^{n-2}) u_t dt-6\Delta t u_t^{n+1}\nonumber\\
=10\left[\left[(t-t^n)u_t\right]_{t^n}^{t^{n+1}}-\int_{t^n}^{t^{n+1}}(t-t^n) u_{tt} dt\right]\nonumber\\
-5\left[\left[(t-t^{n-1})u_t\right]_{t^{n-1}}^{t^{n}}-\int_{t^{n-1}}^{t^{n}}(t-t^{n-1}) u_{tt} dt\right]\nonumber\\
+\left[\left[(t-t^{n-2})u_t\right]_{t^{n-2}}^{t^{n-1}}
-\int_{t^{n-2}}^{t^{n-1}}(t-t^{n-2}) u_{tt} dt\right]-6\Delta t u_t^{n+1}\nonumber\\
=\left[4\Delta t u_t^{n+1} - 5\Delta t u_t^n+\Delta t u_t^{n-1}\right]-10\int_{t^n}^{t^{n+1}}\frac{d}{dt}\left(\frac{1}{2}(t-t^n)^2\right) u_{tt} dt\nonumber\\
+5\int_{t^{n-1}}^{t^{n}}\frac{d}{dt}\left(\frac{1}{2}(t-t^{n-1})^2\right) u_{tt} dt-\int_{t^{n-2}}^{t^{n-1}}\frac{d}{dt}\left(\frac{1}{2}(t-t^{n-2})^2\right) u_{tt} dt\nonumber\\
=\left[4\Delta t \int_{t^n}^{t^{n+1}}u_{tt}dt -\Delta t \int_{t^{n-1}}^{t^n}u_{tt}dt\right]\nonumber\\
-10\left[\left[\frac{1}{2}(t-t^n)^2u_{tt}\right]_{t^n}^{t^{n+1}}-\int_{t^n}^{t^{n+1}}\frac{1}{2}(t-t^n)^2u_{ttt}dt\right]\nonumber\\
+5\left[\left[\frac{1}{2}(t-t^{n-1})^2u_{tt}\right]_{t^{n-1}}^{t^{n}}
-\int_{t^{n-1}}^{t^{n}}\frac{1}{2}(t-t^{n-1})^2u_{ttt}dt\right]\nonumber\\
-\left[\left[\frac{1}{2}(t-t^{n-2})^2u_{tt}\right]_{t^{n-2}}^{t^{n-1}}
-\int_{t^{n-2}}^{t^{n-1}}\frac{1}{2}(t-t^{n-2})^2u_{ttt}dt\right]\nonumber\\
=4\Delta t \left[\left[(t-t^n)u_{tt}\right]_{t^n}^{t^{n+1}}-\int_{t^n}^{t^{n+1}}(t-t^n)u_{ttt}dt\right]\nonumber\\
-\Delta t \left[\left[(t-t^{n-1})u_{tt}\right]_{t^{n-1}}^{t^{n}}
-\int_{t^{n-1}}^{t^{n}}(t-t^{n-1})u_{ttt}dt\right]\nonumber\\
-10\left(\frac{1}{2}\Delta t^2 u_{tt}^{n+1}\right)+5\left(\frac{1}{2}\Delta t^2u_{tt}^n\right)-\left(\frac{1}{2}\Delta t^2 u_{tt}^{n-1}\right)\nonumber\\
+10\int_{t^n}^{t^{n+1}}\frac{1}{2}(t-t^n)^2u_{ttt}dt-5\int_{t^{n-1}}^{t^{n}}\frac{1}{2}(t-t^{n-1})^2u_{ttt}dt+\int_{t^{n-2}}^{t^{n-1}}\frac{1}{2}(t-t^{n-2})^2u_{ttt}dt\nonumber\\
=-\frac{1}{2}\Delta t^2 \left[2\int_{t^n}^{t^{n+1}}u_{ttt}dt
-\int_{t^{n-1}}^{t^n}u_{ttt}dt\right]\nonumber\\
-4\Delta t\int_{t^n}^{t^{n+1}}(t-t^n)u_{ttt}dt+\Delta t\int_{t^{n-1}}^{t^{n}}(t-t^{n-1})u_{ttt}dt\nonumber\\
+10\int_{t^n}^{t^{n+1}}\frac{1}{2}(t-t^n)^2u_{ttt}dt-5\int_{t^{n-1}}^{t^{n}}\frac{1}{2}(t-t^{n-1})^2u_{ttt}dt+\int_{t^{n-2}}^{t^{n-1}}\frac{1}{2}(t-t^{n-2})^2u_{ttt}dt\nonumber
\end{gather}
Then the $L^2$ norm of the term of interest can be estimated as follows
\begin{gather}
\Big\Vert\frac{10u^{n+1}-15u^n+6u^{n-1}-u^{n-2}}{6\Delta t}-u_t^{n+1}\Big\Vert^2\\
=\frac{1}{36\Delta t^2}\bigintsss_{\Omega} \Bigg|-\frac{1}{2}\Delta t^2 \left[2\int_{t^n}^{t^{n+1}}u_{ttt}dt
-\int_{t^{n-1}}^{t^n}u_{ttt}dt\right]\nonumber\\
-4\Delta t\int_{t^n}^{t^{n+1}}(t-t^n)u_{ttt}dt+\Delta t\int_{t^{n-1}}^{t^{n}}(t-t^{n-1})u_{ttt}dt
+10\int_{t^n}^{t^{n+1}}\frac{1}{2}(t-t^n)^2u_{ttt}dt\nonumber\\
-5\int_{t^{n-1}}^{t^{n}}\frac{1}{2}(t-t^{n-1})^2u_{ttt}dt+\int_{t^{n-2}}^{t^{n-1}}\frac{1}{2}(t-t^{n-2})^2u_{ttt}dt\Bigg |^2 dx\nonumber\\
\leq\frac{1}{18\Delta t^2}\bigintsss_{\Omega} \Bigg(\Delta t^4 \Bigg|\int_{t^n}^{t^{n+1}}u_{ttt}dt\Bigg |^2
+\frac{1}{4}\Delta t^4\Bigg|\int_{t^{n-1}}^{t^n}u_{ttt}dt\Bigg |^2\nonumber\\
+16\Delta t^2\Bigg|\int_{t^n}^{t^{n+1}}(t-t^n)u_{ttt}dt\Bigg|^2+\Delta t^2\Bigg|\int_{t^{n-1}}^{t^{n}}(t-t^{n-1})u_{ttt}dt\Bigg|^2\nonumber\\
+25\Bigg|\int_{t^n}^{t^{n+1}}(t-t^n)^2u_{ttt}dt\Bigg|^2
+\frac{25}{4}\Bigg|\int_{t^{n-1}}^{t^{n}}(t-t^{n-1})^2u_{ttt}dt\Bigg|^2\nonumber\\
+\frac{1}{4}\Bigg|\int_{t^{n-2}}^{t^{n-1}}(t-t^{n-2})^2u_{ttt}dt\Bigg |^2 \Bigg)dx\nonumber\\
\leq\frac{1}{18\Delta t^2}\bigintsss_{\Omega} \Bigg(\Delta t^5 \int_{t^n}^{t^{n+1}}|u_{ttt}|^2dt
+\frac{1}{4}\Delta t^5\int_{t^{n-1}}^{t^n}|u_{ttt}|^2dt\nonumber\\
+16\Delta t^3\int_{t^n}^{t^{n+1}}|t-t^n|^2|u_{ttt}|^2dt+\Delta t^3\int_{t^{n-1}}^{t^{n}}|t-t^{n-1}|^2|u_{ttt}|^2dt\Bigg|^2\nonumber\\
+25\Delta t \int_{t^n}^{t^{n+1}}(t-t^n)^2|u_{ttt}|^2dt
+\frac{25}{4}\Delta t\int_{t^{n-1}}^{t^{n}}(t-t^{n-1})^2|u_{ttt}|^2dt\nonumber\\
+\frac{1}{4}\Delta t\int_{t^{n-2}}^{t^{n-1}}(t-t^{n-2})^2|u_{ttt}|^2dt \Bigg)dx\nonumber\\
\leq \frac{7}{3}\Delta t^3 \int_{\Omega}\left(\int_{t^{n-2}}^{t^{n+1}}|u_{ttt}|^2 dt\right) dx\leq \frac{7}{3}\Delta t^3\int_{t^{n-2}}^{t^{n+1}}\Vert u_{ttt}\Vert^2 dt.\nonumber
\end{gather}
Now we prove \eqref{cons2}. To start, we rewrite 
\begin{gather}
 \left(u^{n+1}-3u^n+3u^{n-1}-u^{n-2}\right)\\
=\left[ \left(u^{n+1}-u^{n}\right)-\left(u^n-u^{n-1}\right)\right]-\left[ \left(u^{n}-u^{n-1}\right)-\left(u^{n-1}-u^{n-2}\right)\right].\nonumber
\end{gather}
Using integration by parts, the terms in the first brackets in the above equation can written as
\begin{gather}
 \left(u^{n+1}-u^{n}\right)-\left(u^n-u^{n-1}\right)
 =\int_{t^{n}}^{t^{n+1}}u_{t} dt-\int_{t^{n-1}}^{t^n}u_{t}dt\label{consis1}\\
 =\int_{t^{n}}^{t^{n+1}} \frac{d}{dt}(t-t^n)u_{t} dt-\int_{t^{n-1}}^{t^n}\frac{d}{dt}(t-t^{n}) u_{t}dt\nonumber\\
=\left[\left[(t-t^n)u_{t}\right]_{t^n}^{t^{n+1}} -\int_{t^n}^{t^{n+1}}(t-t^n)u_{tt}dt\right]-\left[\left[(t-t^n)u_{t}\right]_{t^{n-1}}^{t^{n}} -\int_{t^{n-1}}^{t^{n}}(t-t^n)u_{tt}dt\right]\nonumber\\
=\left[\Delta t u_{t}^{n+1} -\int_{t^n}^{t^{n+1}}(t-t^n)u_{tt}dt\right]-\left[\Delta t
u_{t}^{n-1} -\int_{t^{n-1}}^{t^{n}}(t-t^n)u_{tt}dt\right]\nonumber\\
=\Delta t \int_{t^{n-1}}^{t^{n+1}}u_{tt}dt -\int_{t^n}^{t^{n+1}}(t-t^n)u_{tt}dt+\int_{t^{n-1}}^{t^{n}}(t-t^n)u_{tt}dt\nonumber\\
=\Delta t \int_{t^{n-1}}^{t^{n+1}}\frac{d}{dt}(t-t^n)u_{tt}dt -\int_{t^n}^{t^{n+1}}\frac{d}{dt}\left(\frac{1}{2} (t-t^n)^2\right)u_{tt}dt+\int_{t^{n-1}}^{t^{n}}\frac{d}{dt}\left(\frac{1}{2}(t-t^n)^2\right) u_{tt}dt\nonumber\\
=\Delta t \left[ \left[(t-t^n)u_{tt}\right]_{t^{n-1}}^{t^{n+1}}-\int_{t^{n-1}}^{t^{n+1}}(t-t^n)u_{ttt}dt\right]\nonumber\\
-\left[\left[\frac{1}{2}(t-t^n)^2u_{tt}\right]_{t^n}^{t^{n+1}}-\int_{t^{n}}^{t^{n+1}}\left(\frac{1}{2}(t-t^n)^2\right) u_{ttt}dt \right]\nonumber\\
+\left[\left[\frac{1}{2}(t-t^n)^2u_{tt}\right]_{t^{n-1}}^{t^{n}}-\int_{t^{n-1}}^{t^{n}}\left(\frac{1}{2}(t-t^n)^2\right) u_{ttt}dt \right]\nonumber\\
=\Delta t \left[ \Delta t \left(u_{tt}^{n+1}+u_{tt}^{n-1}\right)-\int_{t^{n-1}}^{t^{n+1}}(t-t^n)u_{ttt}dt\right]\nonumber\\
-\left[\left(\frac{1}{2} \Delta t^2\right)u_{tt}^{n+1}-\int_{t^{n}}^{t^{n+1}}\left(\frac{1}{2}(t-t^n)^2\right) u_{ttt}dt \right]\nonumber\\
+\left[\left(-\frac{1}{2}\Delta t^2\right)u_{tt}^{n-1}-\int_{t^{n-1}}^{t^{n}}\left(\frac{1}{2}(t-t^n)^2\right) u_{ttt}dt \right].\nonumber
\end{gather}
Similarly, we have
\begin{gather}
 \left(u^{n}-u^{n-1}\right)-\left(u^{n-1}-u^{n-2}\right)\label{consis2}\\
=\Delta t \left[ \Delta t \left(u_{tt}^{n}+u_{tt}^{n-2}\right)-\int_{t^{n-2}}^{t^{n}}(t-t^{n-1})u_{ttt}dt\right]\nonumber\\
-\left[\left(\frac{1}{2} \Delta t^2\right)u_{tt}^{n}-\int_{t^{n-1}}^{t^{n}}\left(\frac{1}{2}(t-t^{n-1})^2\right) u_{ttt}dt \right]\nonumber\\
+\left[\left(-\frac{1}{2}\Delta t^2\right)u_{tt}^{n-2}-\int_{t^{n-2}}^{t^{n-1}}\left(\frac{1}{2}(t-t^{n-1})^2\right) u_{ttt}dt \right].\nonumber
\end{gather}
Subtracting \eqref{consis2} from \eqref{consis1} gives
\begin{gather}
 u^{n+1}-3u^n+3u^{n-1}-u^{n-2}\\
=\Delta t \left[ \Delta t \left(u_{tt}^{n+1}-u_{tt}^n+u_{tt}^{n-1}-u_{tt}^{n-2}\right)-\int_{t^{n-1}}^{t^{n+1}}(t-t^n)u_{ttt}dt+\int_{t^{n-2}}^{t^{n}}(t-t^{n-1})u_{ttt}dt\right]\nonumber\\
-\left[\left(\frac{1}{2} \Delta t^2\right)u_{tt}^{n+1}-\left(\frac{1}{2} \Delta t^2\right)u_{tt}^{n}+\left(\frac{1}{2}\Delta t^2\right)u_{tt}^{n-1}-\left(\frac{1}{2} \Delta t^2\right)u_{tt}^{n-2}\right]\nonumber\\
+\left[\int_{t^{n}}^{t^{n+1}}\left(\frac{1}{2}(t-t^n)^2\right) u_{ttt}dt -\int_{t^{n-1}}^{t^{n}}\left(\frac{1}{2}(t-t^n)^2\right) u_{ttt}dt \right]\nonumber\\
-\left[\int_{t^{n-1}}^{t^{n}}\left(\frac{1}{2}(t-t^{n-1})^2\right) u_{ttt}dt -\int_{t^{n-2}}^{t^{n-1}}\left(\frac{1}{2}(t-t^{n-1})^2\right) u_{ttt}dt \right]\nonumber\\
=\Delta t \left[ \frac{1}{2}\Delta t \left(u_{tt}^{n+1}-u_{tt}^n+u_{tt}^{n-1}-u_{tt}^{n-2}\right)-\int_{t^{n-1}}^{t^{n+1}}(t-t^n)u_{ttt}dt+\int_{t^{n-2}}^{t^{n}}(t-t^{n-1})u_{ttt}dt\right]\nonumber\\
+\left[\int_{t^{n}}^{t^{n+1}}\left(\frac{1}{2}(t-t^n)^2\right) u_{ttt}dt -\int_{t^{n-1}}^{t^{n}}\left(\frac{1}{2}(t-t^n)^2\right) u_{ttt}dt \right]\nonumber\\
-\left[\int_{t^{n-1}}^{t^{n}}\left(\frac{1}{2}(t-t^{n-1})^2\right) u_{ttt}dt -\int_{t^{n-2}}^{t^{n-1}}\left(\frac{1}{2}(t-t^{n-1})^2\right) u_{ttt}dt \right]\nonumber\\
=\Delta t \left[ \frac{1}{2}\Delta t\left(\int_{t^{n}}^{t^{n+1}}u_{ttt} dt+\int_{t^{n-2}}^{t^{n-1}}u_{ttt}dt\right)-\int_{t^{n-1}}^{t^{n+1}}(t-t^n)u_{ttt}dt+\int_{t^{n-2}}^{t^{n}}(t-t^{n-1})u_{ttt}dt\right]\nonumber\\
+\left[\int_{t^{n}}^{t^{n+1}}\left(\frac{1}{2}(t-t^n)^2\right) u_{ttt}dt -\int_{t^{n-1}}^{t^{n}}\left(\frac{1}{2}(t-t^n)^2\right) u_{ttt}dt \right]\nonumber\\
-\left[\int_{t^{n-1}}^{t^{n}}\left(\frac{1}{2}(t-t^{n-1})^2\right) u_{ttt}dt -\int_{t^{n-2}}^{t^{n-1}}\left(\frac{1}{2}(t-t^{n-1})^2\right) u_{ttt}dt \right].\nonumber
\end{gather}
Then by the Cauchy-Schwarz inequality we have
\begin{gather}
\Vert \nabla \left(u^{n+1}-3u^n+3u^{n-1}-u^{n-2}\right)\Vert^2\\
=\bigintsss_{\Omega} \Bigg |   \frac{1}{2}\Delta t^2\left(\int_{t^{n}}^{t^{n+1}}\nabla u_{ttt} dt
+\int_{t^{n-2}}^{t^{n-1}}\nabla u_{ttt}dt\right)\nonumber\\
-\Delta t\left[\int_{t^{n-1}}^{t^{n+1}}(t-t^n)\nabla u_{ttt}dt-\int_{t^{n-2}}^{t^{n}}(t-t^{n-1})\nabla u_{ttt}dt\right]\nonumber\\
+\left[\int_{t^{n}}^{t^{n+1}}\left(\frac{1}{2}(t-t^n)^2\right) \nabla u_{ttt}dt -\int_{t^{n-1}}^{t^{n}}\left(\frac{1}{2}(t-t^n)^2\right) \nabla u_{ttt}dt \right]\nonumber\\
-\left[\int_{t^{n-1}}^{t^{n}}\left(\frac{1}{2}(t-t^{n-1})^2\right) \nabla u_{ttt}dt -\int_{t^{n-2}}^{t^{n-1}}\left(\frac{1}{2}(t-t^{n-1})^2\right) \nabla u_{ttt}dt \right]\Bigg |^2 dx\nonumber\\
\leq 2\bigintsss_{\Omega}    \frac{1}{4}\Delta t^4 \Big |\int_{t^{n}}^{t^{n+1}}\nabla u_{ttt} dt \Big |^2
+\frac{1}{4}\Delta t^4\Big |\int_{t^{n-2}}^{t^{n-1}}\nabla u_{ttt}dt\Big |^2\nonumber\\
+\Delta t^2\Big |\int_{t^{n-1}}^{t^{n+1}}(t-t^n)\nabla u_{ttt}dt\Big |^2+\Delta t^2\Big |\int_{t^{n-2}}^{t^{n}}(t-t^{n-1})\nabla u_{ttt}dt\Big |^2\nonumber\\
+\Big |\int_{t^{n}}^{t^{n+1}}\left(\frac{1}{2}(t-t^n)^2\right) \nabla u_{ttt}dt \Big |^2+\Big |\int_{t^{n-1}}^{t^{n}}\left(\frac{1}{2}(t-t^n)^2\right) \nabla u_{ttt}dt \Big |^2\nonumber\\
+\Big |\int_{t^{n-1}}^{t^{n}}\left(\frac{1}{2}(t-t^{n-1})^2\right) \nabla u_{ttt}dt\Big |^2 +\Big |\int_{t^{n-2}}^{t^{n-1}}\left(\frac{1}{2}(t-t^{n-1})^2\right) \nabla u_{ttt}dt \Big |^2 dx\nonumber\\
\leq 2\bigintsss_{\Omega}  \Bigg\{  \frac{1}{4}\Delta t^5\int_{t^{n}}^{t^{n+1}} |\nabla u_{ttt}|^2 dt 
+\frac{1}{4}\Delta t^5\int_{t^{n-2}}^{t^{n-1}} |\nabla u_{ttt}|^2dt\nonumber\\
+2\Delta t^5\int_{t^{n-1}}^{t^{n+1}}|\nabla u_{ttt}|^2dt+2\Delta t^5\int_{t^{n-2}}^{t^{n}}|\nabla u_{ttt}|^2dt\nonumber\\
+\frac{1}{4}\Delta t^5\int_{t^{n}}^{t^{n+1}}|\nabla  u_{ttt}|^2 dt+\frac{1}{4}\Delta t^5\int_{t^{n-1}}^{t^{n}}|\nabla u_{ttt}|^2dt \nonumber\\
+\frac{1}{4}\Delta t^5\int_{t^{n-1}}^{t^{n}}| \nabla u_{ttt}|^2dt  +\frac{1}{4}\Delta t^5\int_{t^{n-2}}^{t^{n-1}}| \nabla u_{ttt}|^2 dt  \Bigg \} dx\nonumber\\
\leq 2\int_{\Omega}   \frac{9}{2}\Delta t^5\int_{t^{n-2}}^{t^{n+1}} |\nabla u_{ttt}|^2 dt  dx\nonumber
\leq 9\Delta t^5 \int_{t^{n-2}}^{t^{n+1}}  \Vert \nabla u_{ttt}\Vert^2 dt.
\end{gather}
This completes the proof.
\end{proof}
}


\begin{thebibliography}{99}

\bibitem{BS08} \textsc{S. Brenner and R. Scott}, \emph{The Mathematical
Theory of Finite Element Methods}, Springer, 3rd edition, 2008.

\bibitem{CCDL05} \textsc{M. Carney, P. Cunningham, J. Dowling and C. Lee}, 
\emph{Predicting Probability Distributions for Surf Height Using an Ensemble
of Mixture Density Networks}, International Conference on Machine Learning,
(2005).

\bibitem{ES94} \textsc{C. Ethier and D. Steinman}, \emph{Exact fully 3D Navier-Stokes solutions for benchmarking}, Int. J. Numer. Methods Fluids, 19 (5) (1994), 369-375.

\bibitem{FOP95} \textsc{Y. T. Feng, D. R. J. Owen and D. Peric}, \emph{A
block conjugate gradient method applied to linear systems with multiple
right hand sides}, Comp. Meth. Appl. Mech. \& Engng. 127 (1995), 203-215.

\bibitem{FM97} \textsc{R. W. Freund and M. Malhotra}, \emph{A block QMR
algorithm for non-Hermitian linear systems with multiple right-hand sides},
Linear Algebra and its Applications, 254 (1997), 119-157.

\bibitem{GS96} \textsc{E. Gallopulos and V. Simoncini}, \emph{Convergence of
BLOCK GMRES and matrix polynomials}, Lin. Alg. Appl., 247 (1996), 97-119.

\bibitem{G89} \textsc{M.D. Gunzburger}, \emph{Finite Element Methods for
Viscous Incompressible Flows - A Guide to Theory, Practices, and Algorithms}%
, Academic Press, (1989).

\bibitem{GR79} \textsc{V. Girault and P. Raviart}, \emph{Finite element
approximation of the Navier-Stokes equations}, Lecture Notes in Mathematics,
Vol. 749, Springer, Berlin, 1979.

\bibitem{GQ98} \textsc{J.L. Guermond and L. Quartapelle}, \emph{On stability
and convergence of projection methods based on pressure Poisson equation},
IJNMF, 26 (1998), 1039-1053.

\bibitem{J15} \textsc{N. Jiang}, \emph{A higher order ensemble simulation
algorithm for fluid flows,} Journal of Scientific Computing, 64 (2015),
264-288.

\bibitem{JKL15} \textsc{N. Jiang, S. Kaya and W. Layton}, \emph{Analysis of
model variance for ensemble based turbulence modeling}, Computational
Methods in Applied Mathematics, 15 (2015), 173-188.

\bibitem{JL14} \textsc{N. Jiang and} \textsc{W. Layton}, \emph{An algorithm
for fast calculation of flow ensembles}, IJUQ, 4 (2014), 273-301.

\bibitem{JL15} \textsc{N. Jiang and} \textsc{W. Layton}, \emph{Numerical
analysis of two ensemble eddy viscosity numerical regularizations of fluid
motion}, Numerical Methods for Partial Differential Equations, 31 (2015),
630-651.

\bibitem{Lewis05} \textsc{J.M. Lewis}, \emph{Roots of ensemble forecasting},
Monthly Weather Rev., 133 (2005), 1865-1885.

\bibitem{LP08} \textsc{M. Leutbecher and T.N. Palmer}, \emph{Ensemble
forecasting}, J. Comp. Phys., 227 (2008), 3515-3539.

\bibitem{MX06} \textsc{W.J. Martin and M. Xue}, \emph{Initial condition
sensitivity analysis of a mesoscale forecast using very-large ensembles},
Mon. Wea. Rev., 134 (2006), 192-207.

\bibitem{OR10} \textsc{M.A. Olshanskii and L.G. Rebholz}, \emph{Velocity-vorticity-helicity for formulation and a solver for the Navier-Stokes equations}, J. Comp. Phys., 229 (2010), 4291-4303.

%\bibitem{R15} \textsc{S.S. Ravindran}, \emph{An analysis of the blended three-step backward differentiation formula time-stepping scheme for the Navier-Stokes-type system related to Soret convection}, Numerical Functional Analysis and Optimization, 36
%(2015), 658-686.

\bibitem{BDK82} \textsc{G.A. Baker, V.A. Dougalis and O.A. Karakashian}, \emph{On a higher order accurate fully discrete Galerkin approximation to the Navier-Stokes equations}, Mathematics of Computation, 39 (1982), 339-375.

\bibitem{H01} \textsc{W. Hundsdorfer}, \emph{Partially implicit BDF2 blends for convection dominated flows}, SIAM Journal of Numerical Analysis, 38 (2001), 1763-1783.

\bibitem{E04} \textsc{E. Emmrich}, \emph{Error of the two-step BDF for the incompressible Navier-Stokes problem}, Mathematical Modeling and Numerical Analysis, 38 (2004), 757-764.

\bibitem{VCL10} \textsc{V. Vatsa, M. Carpenter and D. Lockard}, \emph{Re-evaluation of an Optimized Second Order Backward Difference (BDF2OPT) scheme for unsteady flow applications}, AIAA Paper 2010-0122, January 2010.

\bibitem{TK93} \textsc{Z. Toth and E. Kalney}, \emph{Ensemble forecasting at
NMC: The generation of perturbations}, Bull. Amer. Meteor. Soc., 74 (1993),
2317-2330.

\bibitem{BP95}\textsc{R. Buizza and T.N. Palmer},
\emph{The singular-vector structure of the atmospheric global circulation}, Journal of the Atmospheric Sciences, 52 (1995), 1434-1456.

\bibitem{NJ11}\textsc{E.J. Nielsen and W.T. Jones},
\emph{Integrated design of an active flow control system using a time-dependent adjoint method}, Math. Model. Nat. Phenom., 6 (2011), 141-165.

\bibitem{TNW15}\textsc{A. Takhirov, M. Neda and J. Waters},
\emph{Time relaxation algorithm for flow ensembles}, Numerical Methods for Partial Differential Equations, to appear, 2015, DOI: 10.1002/num.22024.

\bibitem{NSMH03}\textsc{M. Nyukhtikov, N. Smelova, B.E. Mitchell and D.G. Holmes}, \emph{Optimized dual-time stepping technique for time-accurate Navier-Stokes calculation}, Proceedings of the 10th Int. Sym. on Unst. Aero., Aeroac., and Aeroelas. of Turbomach. (2003).

\bibitem{H12}\textsc{F. Hecht},
\emph{New development in freefem++}, J. Numer. Math., 20 (2012), no. 3-4, 251-265.

\bibitem{GLOS05}\textsc{T. Gelhard, G. Lube, M.A. Olshanskii and J.-H. Starcke},
\emph{Stabilized finite element schemes with LBB-stable elements for incompressible flows}, J. Comput. Appl. Math., 177 (2005), 243-267.

\bibitem{O02}\textsc{M.A. Olshanskii},
\emph{A low order Galerkin finite element method for the Navier-Stokes equations of steady incompressible flow: A stabilization issue and iterative methods}, Comput. Methods Appl. Mech. Engrg., 191 (2002), 5515-5536.

\bibitem{CELR11}\textsc{M. Case, V. Ervin, A. Linke and L. Rebholz},
\emph{A connection between Scott-Vogelius elements and grad-div stabilization}, SIAM Journal on Numerical Analysis, 49 (2011), 1461-1481.

\end{thebibliography}
\end{document}